\documentclass{amsart}

\newif\ifdraft\draftfalse
\newif\ifcite\citefalse
\newif\ifblow\blowfalse

\makeatletter
\@namedef{subjclassname@2020}{\textup{2020} Mathematics Subject Classification}
\makeatother

\usepackage{amsfonts,amssymb, amsmath}
\usepackage{url}
\usepackage{amscd}
\usepackage{mathrsfs} 
\usepackage[hyperindex,hidelinks]{hyperref}  

\ifdraft\ifcite\usepackage{showkeys}\else\usepackage[notcite,notref]{showkeys}\fi\fi

\newtheorem{proposition}[equation]{Proposition}
\newtheorem{theorem}[equation]{Theorem}

\newtheorem{lemma}[equation]{Lemma}

\theoremstyle{definition}

\theoremstyle{remark}

\numberwithin{equation}{section}

\newcommand\beq{\begin{equation}}
\newcommand\eeq{\end{equation}}

\newcommand\p{\partial}

\newcommand{\R}{\mathbb R}
\newcommand{\C}{\mathbb C}

\newcommand{\Id}{\mathrm{Id}}

\newcommand{\diag}{\operatorname{diag}}

\newcommand{\ip}[2]{\left\langle #1,#2\right\rangle}

\newcommand\al{\alpha}

\newcommand\lb{\label}

\newcommand\er{\eqref}

\newcommand\la{\langle}
\newcommand\ra{\rangle}

\newcommand\bc{{\mathbb C}}

\newcommand\br{{\mathbb R}}

\newcommand\fg{\mathfrak{g}}
\newcommand\fh{\mathfrak{h}}

\newcommand\fn{\mathfrak{n}}

\newcommand\fs{\mathfrak{s}}

\makeatletter
\@namedef{subjclassname@2020}{\textup{2020} Mathematics Subject Classification}
\makeatother
\usepackage[T1]{fontenc}
\usepackage{lmodern,amsmath,amssymb,mathtools,booktabs}
\usepackage{microtype}
\numberwithin{equation}{section}
\newcommand{\g}{\mathfrak g}
\newcommand{\h}{\mathfrak h}
\newcommand{\n}{\mathfrak n}
\newcommand{\s}{\mathfrak s}
\newcommand{\dd}{\,dA}
\newcommand{\pb}{\bar\partial}
\newcommand{\Ad}{\operatorname{Ad}}
\newcommand{\osc}{\operatorname{osc}}
\newcommand{\rh}{\rho^\vee}

\newcommand{\norm}[1]{\lVert#1\rVert}
\allowdisplaybreaks
\title[Analysis of blowup masses in constant-coefficient Toda systems]{Analysis of blowup masses in constant-coefficient Toda systems: W-invariants and Weyl orbits}
\author{Zhaohu Nie}
\address{Department of Mathematics and Statistics, Utah State University, Logan, Utah 84322-3900, USA}
\email{zhaohu.nie@usu.edu}
\subjclass[2020]{35J47, 35B44, 17B80}
\keywords{Toda system, local blowup mass, W-invariant, Weyl group, boundary oscillation}
\begin{document}
\begin{abstract}
We classify the local blowup masses of constant-coefficient Toda systems for all simple Lie algebras with one prescribed singular source, bounded component masses, and bounded boundary oscillations. We encode the source strengths and the blowup masses by two elements of the Cartan subalgebra: the grading element $p_\mu$ and the mass element $q_\sigma$. We show that $p_\mu-q_\sigma=wp_\mu$ for some Weyl group element $w\in W$. The proof uses  the Drinfeld-Sokolov gauge and the resulting holomorphic $W$-invariants for Toda systems. Asymptotic analysis near the source and on a shrinking circle enclosing the total local mass gives two evaluations of the leading Laurent coefficients of the $W$-invariants, which agree by holomorphicity. This agreement enables us to conclude the above Weyl-orbit result. In particular, the quadratic constraint is the local Pohozaev identity. Only with the full set of $W$-invariants can we strengthen earlier orbit-plus-integer restrictions to an exact Weyl-orbit characterization in the constant-coefficient setting. Together with the author's explicit families in \cite{Nie-masses}, this determines all attainable local mass vectors.
\end{abstract}
\maketitle

\section{Introduction and main results}\label{sec:intro}

Blowup masses of Toda systems are closely related to the Weyl group of the underlying Lie algebra. In \cite{Nie-masses}, 
the author constructed explicit radial families realizing the local mass vector associated with each Weyl group element.
In this paper, we prove the converse for constant-coefficient systems: under the usual mass and boundary-oscillation bounds, these are all possible local masses. The proof uses the holomorphic $W$-invariants of the Toda equations.

Let $A=(a_{ij})_{i,j=1}^n$ be a finite simple Cartan matrix. We consider sequences of solutions $u^k=(u_1^k,\ldots,u_n^k)$ of the Toda system on the unit disk $B_1\subset\C$:
\begin{equation}\label{eq:toda}
 \Delta u_i^k+4\sum_{j=1}^n a_{ij}e^{u_j^k}
 =4\pi\gamma_i\delta_0,\qquad \gamma_i>-1.
\end{equation}
Here
\[
 \Delta=\p_x^2+\p_y^2=4\p\pb,\qquad
 \p=\frac{\p}{\p z}=\p_z=(\p_x-i\p_y)/2.
\]
The measure $\delta_0$ is the unit point mass at zero, and the source strengths $\gamma_i$ are fixed. We use the usual solution class: $u_i^k$ is smooth away from zero and the regular part
\[
 \hat u_i^k=u_i^k-2\gamma_i\log|z|
\]
extends continuously across zero. We assume continuous boundary values on $\p B_1$.

A sequence $u^k$ of solutions of \eqref{eq:toda} is said to blow up at the origin if there exist points $x_k\to0$ such that
\begin{equation}\label{blowup}
 \lim_{k\to\infty}\max_{1\le i\le n}\hat u_i^k(x_k)=+\infty.
\end{equation}
After taking a subsequence, we define the local masses by
\begin{equation}\label{eq:mass}
 \sigma_i=\frac1\pi\lim_{r\downarrow0}\lim_{k\to\infty}
          \int_{B_r}e^{u_i^k}\dd.
\end{equation}
(See also \eqref{eq:mass-bound} below.) 
The iterated limit is the atom at zero of the weak limit of $\pi^{-1}e^{u_i^k}dA$. 
We call 
\beq\lb{ma ve}
\sigma=(\sigma_1,\dots,\sigma_n)
\eeq
the local mass vector
of the sequence at zero.

The blowup analysis of exponential equations has a long history, beginning with the estimates of Brezis--Merle \cite{BM} and the quantization theorem of Li--Shafrir \cite{LS}. For Toda systems, Lin--Wei--Zhang \cite{LWZ} classified the local masses for singular $SU(3)$ systems, and Lin--Wei--Yang--Zhang \cite{LWYZ} developed a unified analysis for rank two. We use Lin--Yang--Zhong \cite{LYZ} as our main reference for the local blowup framework in types $A_n$, $B_n$, $C_n$, and $G_2$. Affine analogues for types $A$ and $C^t$ were obtained by Cui--Nie--Yang \cite{CNY}.

The finite-type results in \cite{LYZ} allow variable positive coefficients. In our normalization, their local equation has the form (sum index runs from 1 to $n$ in this paper, unless otherwise specified)
\begin{equation}\label{eq:variable}
 \Delta u_i^k+4\sum_j a_{ij}h_j^k(x)e^{u_j^k}
 =4\pi\gamma_i\delta_0,
\end{equation}
where $h_i^k(0)=1$, $C^{-1}\le h_i^k\le C$, and $\|h_i^k\|_{C^3(B_1)}\le C$.
The substitution $v_i^k=u_i^k+\log h_i^k$ gives
\begin{equation}\label{eq:variable-shift}
 \Delta v_i^k+4\sum_j a_{ij}e^{v_j^k}
 =4\pi\gamma_i\delta_0+\Delta\log h_i^k.
\end{equation}
The term $\Delta\log h_i^k$ in \er{eq:variable-shift} generally destroys the holomorphicity of the $W$-invariants to be introduced in \er{w inv}. Extending our argument to \er{eq:variable} 
requires further estimates.

Our local hypotheses on the solution sequence are the constant-coefficient specialization of \cite[(1.15)--(1.18)]{LYZ}:
\begin{align}
 &\sup_k\sup_K u_i^k<\infty
   &&\text{for every compact }K\Subset B_1\setminus\{0\},\label{eq:isolated}\\
 &\osc_{\p B_1}u_i^k\le C,
   &&1\le i\le n,\label{eq:boundary}\\
 &\int_{B_1}e^{u_i^k}\dd\le C,
   &&1\le i\le n.\label{eq:mass-bound}
\end{align}
Condition \er{eq:isolated} says that zero is the only possible limiting blowup point, and condition \er{eq:mass-bound} says that components have bounded masses. 
In \eqref{eq:boundary}, $\osc_{\p B_1}u_i^k=\sup_{\p B_1}u_i^k-\inf_{\p B_1}u_i^k$, so the condition bounds the variation of each component on the boundary, without bounding its average. In Green's representation \er{old rep}, this condition controls
the positive-order interior derivatives of the harmonic boundary
term $H_i^k$. The condition cannot simply be dropped. Chen's example \cite{Chen} gives a smooth Liouville sequence with local mass $2$ in our normalization. This mass satisfies the orbit-plus-integer restriction, but violates the rank-one Pohozaev identity $\sigma^2=\sigma$. We explain the unbounded boundary oscillation of this example in Section~\ref{sec:chen}.

The equation in Lin--Yang--Zhong \cite{LYZ} has nonlinear coefficient $1$ rather than $4$, and its masses are normalized by $1/(2\pi)$; the conversion is that their masses are twice ours in this paper: 
\begin{equation}\label{eq:LYZ-normalization}
 \sigma_i^{\rm LYZ}=2\sigma_i.
\end{equation}

Condition \eqref{eq:isolated} still allows several concentrating centers and scales to collapse to zero. The analysis in \cite{LYZ} uses the usual approach to select bubbling disks, rescale the solutions, and follow the mass changes as the disks are enlarged and grouped. 
In type $A_n$, their Theorem~1.5 gives an orbit-plus-even-integers restriction. 
Our approach is different. We choose a shrinking circle outside the entire concentrating region and read the total mass from the logarithmic asymptotics there (see Proposition~\ref{two rates}). Comparing the full set of $W$-invariants on this circle with their values at the prescribed source gives the Weyl-orbit restriction (see Proposition~\ref{thm:constraints}), hence strengthening their result by eliminating the integer ambiguities. In Section \ref{sec:rank-one}, we give an example with two concentrating centers but still the mass is determined by the Weyl orbit structure. 

Let us now describe our results more precisely. Let $\g$ be the complex simple Lie algebra with Cartan matrix $A$. Let $\fh_\bc$ be its complex Cartan subalgebra, and $\fh_\br$ the real form. We write the pairing $\fh_\bc^*\times\fh_\bc\to \bc$ either directly or via $\la\cdot,\cdot\ra$. 

Let $\{\al_1,\dots,\al_n\}$ be the simple roots and $\{h_1,\dots,h_n\}$ the simple coroots such that
\begin{equation}\label{eq:cartan}
 a_{ij}=\alpha_i(h_j),\qquad h_j=\alpha_j^\vee.
\end{equation}
Let $\omega_i$ be the fundamental weights, so $\omega_i(h_j)=\delta_{ij}$. Introduce the fundamental coweights
\[
E_j=\sum_i (A^{-1})_{ij}h_i\in\h_{\R},
\qquad\text{so that}\qquad \alpha_i(E_j)=\delta_{ij}.
\]
Put $\rh=\sum_iE_i$, so $\alpha_i(\rh)=1$. We encode the source strengths and masses by
\begin{equation}\label{eq:parameters}
 \begin{gathered}
 \gamma^i=\sum_j(A^{-1})_{ij}\gamma_j,\qquad
 p_\gamma=\sum_i\gamma_iE_i=\sum_i\gamma^i h_i,\\
 p_\mu=p_\gamma+\rh=\sum_i\mu_iE_i,\qquad
 q_\sigma=\sum_i\sigma_i h_i,
 \end{gathered}
\end{equation}
where $\mu_i=1+\gamma_i>0$, and $\sigma_i$ is the mass in \er{eq:mass}. 
We call $p_\mu$ the (general) grading element, and it  satisfies $\alpha_i(p_\mu)=\mu_i>0$.  
Denote the Weyl group by $W$.

We define the shifted Weyl action on $\h_{\R}$ by
\begin{equation}\label{eq:mass-action}
 w\star_\mu q:=p_\mu-w(p_\mu-q),
\end{equation}
where $w\in W$ and $q\in \h_\R$. 
This is an affine action of the finite Weyl group $W$, with fixed point $p_\mu$. When there are no singularities, $p_\mu=\rho^\vee$; under $q=-H$, this becomes the affine action $H\mapsto w(H+\rho^\vee)-\rho^\vee$, which is the dot action for the dual root system; see \cite{Humphreys-O}.

\begin{theorem}\label{thm:main}
Under \eqref{eq:toda}, \eqref{eq:isolated}, \eqref{eq:boundary}, and \eqref{eq:mass-bound}, there exists $w\in W$ such that
\begin{equation*}
 p_\mu-q_\sigma=wp_\mu,\quad i.e.,\ q_\sigma = w\star_\mu 0. 
\end{equation*}
Equivalently, the local mass vector \er{ma ve} belongs to the finite set
\begin{equation}\label{eq:Gamma}
 \Gamma_A(\mu)=
 \left\{\bigl(\omega_i(p_\mu-wp_\mu)\bigr)_{i=1}^n:w\in W\right\}.
\end{equation}
\end{theorem}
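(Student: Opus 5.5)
The plan is to compare two evaluations of the holomorphic $W$-invariants, as the abstract indicates.

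\emph{Construction of the invariants.} Write the Toda system as a zero-curvature (Lax) equation $[\p+\mathcal A,\pb+\mathcal B]=0$ for $\g$-valued connections built from $u^k$. On the punctured disk, apply the Drinfeld--Sokolov gauge to bring the $\p$-part into a canonical form $\p+\Lambda+\sum_{m}\mathcal W_m^k(z)\,e_{-m}$. Here $\Lambda$ is the principal nilpotent and the $e_{-m}$ span a slice indexed by the exponents $m_1,\dots,m_n$ of $\g$. Since the right-hand side of \eqref{eq:toda} vanishes away from $0$, the coefficients $\mathcal W^k_m$ are holomorphic in $B_1\setminus\{0\}$. They are differential polynomials in $\p u^k$, and they have at most a pole of order $m+1$ at the origin. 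Equivalently, $\mathcal W^k_m=I_{m+1}(\p u^k+\dots)$, where $I_{m+1}$ ranges over a basis of $W$-invariant polynomials on $\fh$ of degrees $m_i+1$, evaluated on the leading Cartan part.

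\emph{Evaluation at the source.} Near $0$ we have $\hat u^k$ continuous, so $\p u^k_i=\gamma_i/z+O(\text{regular})$. The leading Laurent coefficient of $\mathcal W^k_m$ at $z^{-(m+1)}$ is therefore a fixed $W$-invariant polynomial $P_{m+1}$ evaluated at the element $p_\mu$. The shift from $p_\gamma$ to $p_\mu=p_\gamma+\rh$ comes from the $\Lambda$/gauge terms, which produce the $\rh$ correction; this is the usual computation for the radial solutions $\mathcal W_m=c_m P_{m+1}(p_\mu)z^{-m-1}$. The key point is that this coefficient does not depend on $k$.

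\emph{Evaluation on a shrinking circle.} We use the earlier asymptotic result (Proposition~\ref{two rates}). By \eqref{eq:isolated}, \eqref{eq:boundary}, \eqref{eq:mass-bound} and Green's representation \eqref{old rep}, we choose radii $r_k\to0$ (slowly) with three properties. First, $r_k$ lies outside the whole concentration region. Second, $e^{u^k}$ is negligible on $|z|\approx r_k$. Third,
\[
 \p u^k_i=\frac{\gamma_i-\sum_j a_{ij}\sigma_j}{z}+o(|z|^{-1})\quad\text{uniformly on } |z|=r_k,
\]
with derivatives of $e^{u^k}$ terms also $o$. Here boundary oscillation controls the harmonic part $H^k_i$. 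In Cartan language the residue element becomes $p_\gamma-q_\sigma$ (since $\sum_j a_{ij}\sigma_j=\alpha_i(q_\sigma)$). Hence $r_k^{m+1}\mathcal W^k_m$ on that circle equals $c_mP_{m+1}(p_\mu-q_\sigma)+o(1)$.

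\emph{Comparison and Weyl orbit.} By holomorphicity, the residue $\frac1{2\pi i}\oint_{|z|=r}z^{m}\mathcal W^k_m\,dz$ is independent of $r\in(0,1)$. Letting $r\to0$ for fixed $k$ gives $c_mP_{m+1}(p_\mu)$. Evaluating at $r=r_k$ and letting $k\to\infty$ gives $c_mP_{m+1}(p_\mu-q_\sigma)$.

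There is a subtlety: the full Laurent series at the circle could contain higher-order poles coming from concentrated bubbles. To handle it, I would show that the coefficients of $z^{-j}$ with $j>m+1$ vanish, since $\mathcal W^k_m$ has pole order $\le m+1$ at the source for each $k$. Thus the residue pairing picks out exactly the leading coefficient. With $c_m\neq0$ we get $P(p_\mu)=P(p_\mu-q_\sigma)$ for all generators $P$ of $\mathbb C[\fh]^W$.

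By Chevalley's theorem, the invariants separate $W$-orbits on $\fh$. Hence $p_\mu-q_\sigma=wp_\mu$ for some $w\in W$. Applying $\omega_i$ yields \eqref{eq:Gamma}.

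\emph{Main obstacle.} I expect the hardest step to be the circle asymptotics. We must find $r_k$ on which all nonlinear contributions to $\mathcal W^k_m$ (products involving $e^{u^k_j}$ and higher derivatives of $u^k$) are $o(r_k^{-m-1})$, with the sharp residue $\gamma_i-\alpha_i(q_\sigma)$. This needs Brezis--Merle type decay of $e^{u^k}$ outside the bubbling region. It also needs gradient estimates from the Green representation, with harmonic terms controlled by \eqref{eq:boundary}; Chen's example shows this last control is essential.
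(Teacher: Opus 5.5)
Your plan is the paper's proof: Drinfeld--Sokolov $W$-invariants, the source and outer jet asymptotics of Proposition~\ref{two rates}, $r$-independence of $\frac1{2\pi i}\oint_{|z|=r}z^{d_a-1}W_a\,dz$, and orbit separation by invariants. Your worry about extra poles on the circle is moot, since this integral sees only the $z^{-d_a}$ coefficient and uniform convergence on $|y|=1$ already gives the limit.

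The one step you assert rather than prove is Proposition~\ref{thm:constraints}, and you state it in the wrong form. The leading coefficients are not per-degree multiples of homogeneous invariants of $H+\rh$: already in rank one, $c(bh)$ is a multiple of $b(b+1)=(b+\tfrac12)^2-\tfrac14$. Instead, $(c_a(H))_a$ is related to $(I_a(H+\rh))_a$ by a triangular polynomial automorphism, which the paper obtains from the $z^{\rh}$-gauge and Kostant's slice; that triangularity is what makes equality of all $c_a$ equivalent to equality of all basic invariants.
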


Theorem~\ref{thm:main} says that $q_\sigma$ belongs to the orbit
\begin{equation*}
 \mathcal Q_\mu:=W\star_\mu0
 =\{p_\mu-wp_\mu:w\in W\}.
\end{equation*}
The set $\Gamma_A(\mu)$ in \eqref{eq:Gamma} consists of the coordinates of these mass elements in the simple-coroot basis. In type $A_n$, the restriction in \cite[Theorem~1.5]{LYZ}, under their variable-coefficient hypotheses and the normalization \eqref{eq:LYZ-normalization}, reads $q_\sigma\in\mathcal Q_\mu+Q^\vee$, where $Q^\vee=\bigoplus_i\mathbb Z h_i$ is the coroot lattice.

Therefore, for the subcase of constant-coefficient Toda systems \er{eq:toda}, our paper improves \cite{LYZ} in two ways. First, we remove the coroot-lattice ambiguity. Second, our result holds for all finite simple Lie algebras, rather than their restriction to $A, B, C, G$ types. Furthermore, our method is totally different in utilizing outer-annulus asymptotics and $W$-invariants. 

The families of \cite{Nie-masses} realize every vector in \eqref{eq:Gamma}. Thus Theorem~\ref{thm:main} determines the full local mass set for constant-coefficient Toda systems under the conditions \eqref{eq:isolated}, \eqref{eq:boundary}, and \eqref{eq:mass-bound}. We verify the local hypotheses for these families in Section~\ref{sec:realization}.

Here are the key steps in the proof. With $U_i^k=\sum_j(A^{-1})_{ij}u_j^k$, \er{eq:toda} becomes  
\begin{equation}\label{eq:dual}
 \Delta U_i^k=-4e^{u_i^k}+4\pi\gamma^i\delta_0.
\end{equation}
The two logarithmic coefficients to be compared are $\gamma^i$ at the source and $\gamma^i-\sigma_i$ outside the whole atom. The following is the analytical core of this paper. 

\begin{proposition}[Source and outer asymptotics]\label{two rates}
\begin{enumerate}
\item\label{part:source} For each fixed solution of \eqref{eq:toda} and each integer $m\ge1$,
\begin{equation}\label{eq:source-jets}
 r^m\p^mU_i(ry)\longrightarrow
 (-1)^{m-1}(m-1)!\frac{\gamma^i}{y^m}
 \quad(r\to 0).
\end{equation}
\item\label{part:outer} For a sequence $u_i^k$ of solutions of \er{eq:toda} satisfying the hypotheses of Theorem~\ref{thm:main} with local masses $\sigma_i$ \er{eq:mass}, after passage to a subsequence there exist radii $r_k\to 0$ enclosing the total local mass in the sense of Lemma~\ref{lem:radii}, such that, for each integer $m\ge1$,
\begin{equation}\label{eq:outer-jets}
 r_k^m\p^mU_i^k(r_ky)\longrightarrow
 (-1)^{m-1}(m-1)!\frac{\gamma^i-\sigma_i}{y^m}
 \quad(k\to\infty).
\end{equation}
\end{enumerate}
Both convergences hold locally uniformly on $\C^*$, with all further derivatives on compact annuli. 
\end{proposition}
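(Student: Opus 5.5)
Both parts rest on the Green representation of $U_i$ on a disk. Since $\Delta U_i=-4e^{u_i}+4\pi\gamma^i\delta_0$ by \eqref{eq:dual}, we write
\[
U_i(z)=\gamma^i\cdot 2\log|z|-\frac{2}{\pi}\int_{B_1}\log|z-\xi|\,e^{u_i(\xi)}\,dA(\xi)+H_i(z),
\]
where $H_i$ is harmonic in $B_1$. Differentiating with $\p$ gives
\[
\p U_i(z)=\frac{\gamma^i}{z}-\frac1\pi\int_{B_1}\frac{e^{u_i(\xi)}}{z-\xi}\,dA(\xi)+\p H_i(z),
\]
and higher $\p^m$ follow by differentiating the Cauchy kernel. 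After the rescaling $z=ry$, the claim reduces to showing two things. First, $r^m\p^mH_i(ry)\to0$. Second, the Cauchy-type integral contributes asymptotically $-c/(ry)^m\cdot(-1)^{m-1}(m-1)!$, where $c$ is the mass enclosed inside the relevant circle (zero in part \eqref{part:source}, $\sigma_i$ in part \eqref{part:outer}).

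For part \eqref{part:source}, the solution is fixed. Since $\gamma_j>-1$ and $\hat u_j$ is continuous, we have $e^{u_i}\le C|\xi|^{2\gamma_i}\in L^p$ for some $p>1$ near $0$. We split the integral over $|\xi|<Rr$ and $|\xi|>Rr$. After scaling, the inner part equals $r^{-1}\int_{|\eta|<R}\frac{r^2e^{u_i(r\eta)}}{y-\eta}dA(\eta)$. Because $r^2e^{u_i(r\eta)}\le Cr^{2+2\gamma_i}|\eta|^{2\gamma_i}$, this part, multiplied by $r$, tends to $0$; the local integrability of $|\eta|^{2\gamma_i}/|y-\eta|$ handles the singularity. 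The outer region $|\xi|>Rr$ contributes an analytic function near $0$ whose $\p^{m-1}$ derivatives, scaled by $r^m$, vanish as $r\to0$. The term $r^m\p^mH_i(ry)$ is $O(r^m)$. Only $\gamma^i/z$ survives, and its $m$-th derivative gives \eqref{eq:source-jets}. Uniformity on compact subsets of $\C^*$, together with all derivatives in $y$, is immediate from this explicit form.

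For part \eqref{part:outer}, I would first choose $r_k$ as in Lemma~\ref{lem:radii}. By a diagonal argument using \eqref{eq:mass}, we can pick $r_k\to0$ and $\epsilon_k\to0$ with two properties. First, $\frac1\pi\int_{B_{\epsilon_kr_k}}e^{u_i^k}\to\sigma_i$. Second, the mass in the annulus $\epsilon_kr_k\le|\xi|\le r_k/\epsilon_k$ tends to $0$. This is the key step, and I expect it to be the main obstacle. We need both that no mass is lost near scale $r_k$ and that $e^{u_i^k}$ is pointwise small there, so that $r_k^2e^{u_i^k}(r_ky)$ does not concentrate on $|y|\sim1$. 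I would use \eqref{eq:isolated}: the limit profile is bounded on compacts away from $0$, so the mass in $\{\delta\le|\xi|\le\delta'\}$ converges to that of the weak limit. Choosing $r_k$ slowly enough to shrink then makes the intermediate mass vanish. Pointwise smallness of $r_k^2e^{u^k_i}$ on the annulus follows from uniform bounds of $u_i^k$ on compact sets of $B_1\setminus\{0\}$, since $r_k^2\to0$.

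The remaining terms are then handled as follows.

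\emph{Inner mass.} Expanding $\frac{1}{r_ky-\xi}$ for $|\xi|\le\epsilon_kr_k$ gives $\frac{1}{r_ky}(1+O(\epsilon_k))$. Hence the inner contribution to $r_k^m\p^m$ is $-(\sigma_i+o(1))(-1)^{m-1}(m-1)!/y^m$.

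\emph{Intermediate annulus.} With small mass, small density, and a bounded kernel after removing a neighborhood of $y$, this contribution is $o(1)$.

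\emph{Outer region $|\xi|\ge r_k/\epsilon_k$.} The kernel derivatives are bounded by $C|\xi|^{-m-1}$. Combined with the pointwise bound from \eqref{eq:isolated} and \eqref{eq:mass-bound}, this gives a contribution $o(1)$ after multiplication by $r_k^m$.

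\emph{Harmonic part.} Here \eqref{eq:boundary} enters. $H_i^k$ minus its boundary average has boundary data bounded by the oscillation, which is bounded because $U_i^k$ is a combination of the $u_j^k$. Hence all interior derivatives of $H_i^k$ are bounded on $B_{1/2}$, and $r_k^m\p^mH_i^k\to0$.

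Summing the four contributions gives \eqref{eq:outer-jets}. Derivatives in $y$ on compact annuli follow in the same way.
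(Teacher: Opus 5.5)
There is a genuine gap in part (2), at the step you yourself flagged as the main obstacle. Your claim that $r_k^2e^{u_i^k}$ is pointwise small on $\{\epsilon_kr_k\le|\xi|\le r_k/\epsilon_k\}$ does not follow from \eqref{eq:isolated}. That hypothesis bounds $u_i^k$ by a constant $C_K$ only on a \emph{fixed} compact $K\Subset B_1\setminus\{0\}$. Your annuli shrink into the origin, so they lie in no fixed $K$. The factor $r_k^2\to0$ cannot beat $e^{C_K}$ when nothing is known about how $C_K$ grows as $K$ approaches $0$. A diagonal choice of $r_k$ would still need the limiting density on $B_1\setminus\{0\}$ to be $o(|\xi|^{-2})$ at the origin, and that is itself a small-mass regularity statement.

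Without pointwise control, small annular mass only makes $\int\rho_i^k(\eta)(y-\eta)^{-1}\,dA_\eta$ small in $L^1_{\mathrm{loc}}$, not uniformly on $|y|=1$. A tiny mass packed into a much tinier disk around a point makes the Cauchy integral large there.

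The missing ingredient is an $\epsilon$-regularity lemma: small mass on a rescaled annulus forces $\rho_i^k\to0$ in every $C^m$. The paper proves this as Lemma~\ref{lem:small}, in these steps:
\begin{enumerate}
\item write $v_i^k=c_i^k+H_i^k+R_i^k$;
\item bound the rescaled harmonic part $H_i^k$ using \eqref{eq:boundary}, through the bounded derivatives of $b_i^k$, together with the exterior potentials;
\item get $e^{c_i^k}\le C\epsilon_k$ from Jensen;
\item bound $e^{R_i^k}$ in $L^p$ by the Brezis--Merle inequality, where the off-diagonal potentials are harmless because $a_{ij}\le0$;
\item bootstrap.
\end{enumerate}
So the oscillation hypothesis is needed here as well, not only for $r_k^m\p^mH_i^k\to0$. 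The rest of your part (2) matches the paper's Lemmas~\ref{lem:green}--\ref{lem:radii} and its $m=1$ computation: the choice of radii, the inner-mass expansion, the far-field estimate, and the harmonic term.

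A second gap concerns $m\ge2$. Differentiating the Cauchy kernel $m-1$ times gives $(y-\eta)^{-m}$, which is not locally integrable in the plane. So ``small density and a bounded kernel after removing a neighborhood of $y$'' only handles $m=1$. Near $y$ you have a singular integral, which requires smallness of $\rho_i^k$ in $C^{m-1,\alpha}$, not merely in sup norm.

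The paper sidesteps this. It proves $m=1$ from the Cauchy representation. It then normalizes $U_i^k(r_ky)-2(\gamma^i-\sigma_i)\log|y|$ at a point to get $\widetilde F_i^k$. Finally it applies interior Schauder estimates to $\Delta_y\widetilde F_i^k=-4\rho_i^k$, again using the $C^m$ smallness from Lemma~\ref{lem:small}.

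The same objection applies to ``immediate from this explicit form'' in part (1). There it is easily repaired, e.g.\ by the paper's rescaling $v_{i,r}(y)=V_i(ry)-V_i(0)$ followed by a $W^{2,p}$ and Schauder bootstrap. Your $m=1$ argument for part (1) is otherwise sound.
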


The outer-annulus asymptotic in Proposition~\ref{two rates}(\ref{part:outer}) is the technical core of the argument. Its proof uses Green's formula and the logarithmic-potential estimate underlying the Brezis--Merle method \cite{BM}. The entire-solution argument in \cite[Proposition~5.1]{KLNW} uses control at zero and infinity, and \cite[Theorem~1.1]{KLNW} gives the analogue of \er{eq:outer-jets} at infinity.  Here the outer-annulus estimates provide the corresponding local comparison.

We recall the construction of $W$-invariants following \cite[Section~4]{KLNW}; see also \cite{DS,Nie}. Let $\phi_1,\ldots,\phi_n$ be smooth functions on a domain $\Omega\subset\C$, and put
\(
 f=\sum_i e_{-\alpha_i},
\)
where $e_{-\al_i}$ is the root vector for the negative simple root. 
Let 
\beq\lb{grading}
\g=\bigoplus_m\g_m,\quad [\rh,X]=mX \text{ for }X\in\g_m,
\eeq
be the principal grading. Choose a homogeneous Kostant slice $\s\subset\n_+=\oplus_{m\geq 1}\fg_m$ and a basis such that
\beq\lb{slice}
 \g=[f,\g]\oplus\s,\qquad
 \s=\operatorname{span}\{s_1,\ldots,s_n\},\quad s_a\in\g_{d_a-1},
\eeq
where $d_1,\ldots,d_n$ are the degrees of the basic homogeneous invariant polynomials \cite{Kostant}, counted with multiplicity. There is a unique $M:\Omega\to N_+$, where $N_+$ is the Lie group corresponding to the nilpotent Lie algebra $\fn_+$, such that
\begin{equation*}
 M(z)(\p+f-\sum_i\phi_i h_i)M(z)^{-1}=\p+f+\sum_a S_a(\phi)s_a.
\end{equation*}
Each coefficient $S_a(\phi)$ is a differential polynomial in the $\phi_i$ and their $z$-derivatives, depending only on the chosen slice basis. Assigning weight $m+1$ to $\p^m\phi_i$, the polynomial $S_a$ is homogeneous of weight $d_a$. Furthermore, the coordinates of $M$ are successively defined, differential polynomials in the $\phi_i$, and 
\beq\lb{homog}
\text{the  coordinate of }M\text{ at a root of height }m 
\text{ has weight }m.
\eeq

For a solution of \er{eq:toda}, take $\phi_i=\p U_i$. Then
\beq\lb{w inv}
 W_a[U]:=S_a(\p U_1,\ldots,\p U_n)
\eeq
is holomorphic away from the sources by \cite[Lemma~4.1(iii)]{KLNW}. These are the $W$-invariants of the solution.

Finally, for $H=\sum_i b_i h_i\in\h_\C$ with $b_i\in \bc$, take $\phi_i=b_i/z$. Since
\[
 \p^m\phi_i=(-1)^m m!\frac{b_i}{z^{m+1}},
\]
homogeneity gives $S_a(\phi)=c_a(H)z^{-d_a}$, where $c_a$ is a polynomial on $\h_\C$. The corresponding gauge $M_H$ satisfies
\begin{equation}\label{def ca}
 M_H(z)\left(\p+f-\frac H z\right)M_H(z)^{-1}
 =\p+f+\sum_a\frac{c_a(H)}{z^{d_a}}s_a.
\end{equation}
Let $I_a$ be homogeneous basic Weyl-invariant polynomials of degrees $d_a$ on $\h_\C$ \cite{Kostant}.
We have the following algebraic equivalence. 

\begin{proposition}\label{thm:constraints}
Let $H, H'\in \fh_\bc$. The following three conditions are equivalent. 
\begin{enumerate}
\item\label{part:residues} the residue identities
\begin{equation}\label{eq:c-constraints}
 c_a(H)=c_a(H'),\qquad a=1,\ldots,n;
\end{equation}
\item\label{part:invariants} the invariant identities
\begin{equation}\label{eq:invariant-constraints}
 I_a(H+\rh)=I_a(H'+\rh),\qquad a=1,\ldots,n;
\end{equation}
\item\label{part:orbit} the Weyl-orbit identity
\beq\lb{gen Weyl}
H'+\rh = w(H+\rh),\quad\exists w\in W.
\eeq
\end{enumerate}
\end{proposition}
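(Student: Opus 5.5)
The plan is to prove (2)$\Leftrightarrow$(3) by standard invariant theory, and (1)$\Leftrightarrow$(2) by identifying the map $H\mapsto(c_a(H))_a$ with a Chevalley--Kostant invariant map, up to a triangular polynomial change of coordinates.

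For (3)$\Rightarrow$(2), use Weyl invariance of each $I_a$. For (2)$\Rightarrow$(3), invoke Chevalley's theorem: $\C[\h_\C]^W=\C[I_1,\dots,I_n]$, and the invariant polynomials separate $W$-orbits in $\h_\C$, since $W$ is finite. So equal values of all $I_a$ at $H+\rh$ and $H'+\rh$ force these two points to lie in the same orbit.

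For (1)$\Leftrightarrow$(2), interpret $\p+f-H/z$ as a regular-singular connection and compare residues after a gauge to a form with a single pole. I would use the substitution $z=e^t$. Under it, $z\p_z+ zf - H$ becomes $\p_t+ e^t f - H$. Then conjugate by $z^{\rh}$, using $\Ad(z^{\rh})f=z^{-1}f$ since $f\in\g_{-1}$. This turns $z(\p+f-H/z)$ into
\[
 z^{\rh}\,z(\p+f-H/z)\,z^{-\rh} = z\p + f - (H+\rh),
\]
which is an operator with constant coefficients in the variable $t$. Likewise, from \eqref{def ca} and homogeneity (the $M_H$ coordinate at height $m$ is $\text{const}\cdot z^{-m}$ by \eqref{homog}), the gauge $M_H$ becomes a constant element $M_0=z^{\rh}M_Hz^{-\rh}\in N_+$. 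The right-hand side becomes
\[
 z\p+f+\sum_a c_a(H)s_a-\rh,
\]
since $s_a\in\g_{d_a-1}$ and the powers of $z$ cancel exactly. The constant shift by $\rh$ arises from conjugating $z\p$ by $z^{\rh}$.

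Hence we obtain a constant $N_+$-conjugacy
\[
 \Ad(M_0)\bigl(f-(H+\rh)\bigr)=f-\rh+\sum_a c_a(H)s_a+(\text{correction}),
\]
where the correction comes from the fact that $M_0$ is conjugated only as a matrix, after accounting for the derivative term $z\p M_0=0$. Since $M_0$ is constant, there is no derivative term, so I expect the identity to read exactly
\[
 \Ad(M_0)\bigl(f-(H+\rh)\bigr)=f+\sum_a c_a(H)s_a-\rh .
\]
Then apply the invariant polynomials $P_a$ on $\g$, with $P_a|_{\h}=I_a$. Since $f-(H+\rh)$ lies in the Borel $\mathfrak b_-$ with Cartan part $-(H+\rh)$, we get $P_a(f-(H+\rh))=I_a(-(H+\rh))=\pm I_a(H+\rh)$. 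Hence
\[
 \pm I_a(H+\rh)=P_a\Bigl(f-\rh+\sum_b c_b(H)s_b\Bigr).
\]

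Kostant's theorem says that $X\mapsto (P_a(X))$ restricted to the slice $f+\s$ is an isomorphism onto $\C^n$. To handle the $-\rh$ term, I would absorb it by a further constant conjugation. Since $\rh\in[f,\g]$, namely $[f,e]=-\rh$ for the principal $e$ up to sign, there is a unique $N_+$-gauge taking $f-\rh+\sum c_bs_b$ to $f+\sum_b \tilde c_b s_b$, where $\tilde c=c+\kappa$ is a triangular polynomial shift. This works because $\rh$ has degree $0<d_b-1$ is false only in degree $0$, and the shift affects higher degrees by fixed polynomials in lower $c$'s. Consequently, $(I_a(H+\rh))_a$ and $(c_a(H))_a$ determine each other through a bijective triangular polynomial map of $\C^n$, independent of $H$, which yields (1)$\Leftrightarrow$(2).

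The main obstacle is the bookkeeping of this last step. One must verify that $\Ad(N_+)$ acts on $f+\mathfrak b_+$ with slice $f+\s$ (Kostant), so that $f-\rh+\sum c_bs_b$ lies in $f+\mathfrak b_+$ and has a unique slice representative. Its slice coordinates then depend only on $(c_b)$, through a fixed polynomial map that is invertible, namely unipotent triangular by degree. Injectivity in both directions then follows from Kostant's bijection $f+\s\cong\C^n$ via $(P_a)$.
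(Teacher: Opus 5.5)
Your proposal is correct and follows essentially the paper's route: conjugating by $z^{\rh}$ gives the constant-gauge identity $\Ad(N_H)(f-H-\rh)=f-\rh+\sum_a c_a(H)s_a$; you then evaluate the invariants on the Borel element $f-(H+\rh)$, apply Kostant's slice isomorphism, and conclude by orbit separation under the finite group $W$. The one variation is that you move $f-\rh+\sum_b c_bs_b$ back into $f+\s$ using Kostant's free $N_+$-action on $f+\mathfrak b_+$, whereas the paper shows directly that $t\mapsto I_a(f-\rh+\sum_b t_bs_b)$ is triangular. Your stated reason for the triangularity of $c\mapsto\tilde c$ is garbled and should be replaced by the paper's weighted-homogeneity argument: scale by $\lambda\Ad(\lambda^{\rh})$ with the coefficient of $\rh$ given weight one, and note that $\tilde c=c$ when that coefficient is zero.
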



The holomorphicity of the $W$-invariant \er{w inv} and Proposition \ref{two rates}, together with Proposition \ref{thm:constraints}, give the proof of our main Theorem \ref{thm:main}. 

The paper is organized as follows. Section~\ref{sec:annulus} proves the inner and outer asymptotics in Proposition \ref{two rates}. Section~\ref{sec:currents} proves conservation of the leading Laurent coefficients, identifies the resulting Weyl orbits, and verifies realization of the mass vectors. The quadratic restriction in Section~\ref{sec:pohozaev} recovers the local Pohozaev identity. Section~\ref{sec:examples} gives the examples discussed above and an $A_2$ constraint from the cubic invariant.

\medskip
\noindent{\bf Acknowledgment of AI assistance.}
As a Plus user, the author used ChatGPT for mathematical discussion, assistance with
checking computations, and editorial suggestions during the preparation
of this paper. The author is responsible for its mathematical content
and final presentation.

\section{The two asymptotics}\label{sec:annulus}

In this section, we prove Proposition~\ref{two rates}. The inner asymptotics follow by rescaling a fixed solution near its prescribed source. For the outer asymptotics, we choose an annulus enclosing the whole atom in Lemma \ref{lem:radii} and show, in Lemma \ref{lem:small}, that its nonlinear density tends to zero after rescaling. Green's formula then gives the coefficient $\gamma^i-\sigma_i$. The mass is already encoded in the integral in \er{eq:green-gradient}; the estimates below justify replacing the concentrating measure by its atom when evaluating the rescaled integral.

In this section, $C$ denotes different constants, which may depend on the Cartan matrix, the source strengths, the bounds in \eqref{eq:boundary}--\eqref{eq:mass-bound}, and the compact sets under consideration.

\subsection{Inner asymptotics at the source}

\begin{proof}[Proof of Proposition~\ref{two rates}(\ref{part:source})]
We use the near-source argument of \cite[(5.10)--(5.11)]{LWY}, as adapted in \cite[proof of Proposition~5.1]{KLNW}, and give the rescaling details here. Fix one solution and set $V_i=U_i-2\gamma^i\log|z|$. The regular parts $V_i$ are continuous at zero. Choose compactly nested annuli
\[
 K\Subset\Omega'\Subset\Omega\Subset\C^*,
\]
and put $v_{i,r}(y)=V_i(ry)-V_i(0)$. Continuity gives $\|v_{i,r}\|_{L^\infty(\Omega)}\to0$ as $r\to 0$. The equation for the regular part becomes
\begin{equation}\label{eq:source-rescaled}
 \Delta_yv_{i,r}(y)=-4r^{2+2\gamma_i}|y|^{2\gamma_i}
       \exp\left(\sum_j a_{ij}\bigl(V_j(0)+v_{j,r}(y)\bigr)\right)
       =:F_{i,r}(y).
\end{equation}
Since $\Omega$ stays away from zero, the factors following $r^{2+2\gamma_i}$ are uniformly bounded. Thus $\|F_{i,r}\|_{L^\infty(\Omega)}\to0$, because $\gamma_i>-1$. For any fixed $p>2$, the interior estimate gives
\[
 \|v_{i,r}\|_{W^{2,p}(\Omega')}
 \le C\left(\|v_{i,r}\|_{L^p(\Omega)}
                  +\|F_{i,r}\|_{L^p(\Omega)}\right)\longrightarrow0.
\]
Sobolev embedding therefore gives $v_{i, r}\to 0$ in $C^{1,\alpha}(\Omega')$ for $0<\alpha<1-2/p$. The formula for $F_{i,r}$ in \er{eq:source-rescaled} then gives $F_{i,r}\to0$ in $C^{1,\alpha}(\Omega')$. Repeated interior Schauder estimates on smaller annuli yield $v_{i,r}\to0$ in every $C^m(K)$.

Consequently, for every $m\ge1$,
\[
 r^m\p_z^mV_i(ry)=\p_y^m v_{i,r}(y)\longrightarrow0.
\]
On the other hand,
\[
 \p_y^m\bigl(2\gamma^i\log|ry|\bigr)
 =(-1)^{m-1}(m-1)!\frac{\gamma^i}{y^m}.
\]
Adding the two terms proves \eqref{eq:source-jets}, with all further derivatives on compact annuli. The constants may depend on the fixed solution.
\end{proof}

\subsection{Green representation and the choice of outer radii}
\begin{lemma}\label{lem:green}
On $B_{1/2}\setminus\{0\}$ one has
\begin{equation}\label{eq:green-gradient}
 \p U_i^k(z)=\frac{\gamma^i}{z}
 -\frac1\pi\int_{B_1}\frac{e^{u_i^k(w)}}{z-w}\dd_w+g_i^k(z),
\end{equation}
where $g_i^k$ is holomorphic and uniformly bounded with all its derivatives on each smaller disk.
\end{lemma}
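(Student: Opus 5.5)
We will prove Lemma~\ref{lem:green} from Green's representation formula on $B_1$ applied to the dual potentials $U_i^k$, which satisfy \eqref{eq:dual}. Write $G(z,w)=\frac1{2\pi}\log|z-w|+h(z,w)$ for the Dirichlet Green function of $B_1$, and represent
\begin{equation*}
 U_i^k(z)=\int_{B_1}G(z,w)\Delta U_i^k(w)\dd_w+H_i^k(z),
\end{equation*}
where $H_i^k$ is the harmonic extension of $U_i^k|_{\p B_1}=\sum_j(A^{-1})_{ij}u_j^k|_{\p B_1}$. We insert $\Delta U_i^k=-4e^{u_i^k}+4\pi\gamma^i\delta_0$. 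The $\delta_0$ term contributes $2\gamma^i\log|z|+4\pi\gamma^ih(z,0)=2\gamma^i\log|z|$, since $h(z,0)=0$ on the unit disk. Using $\p_z\log|z-w|=\frac{1}{2(z-w)}$, differentiation gives $\gamma^i/z$ from the source and $-\frac{1}{\pi}\int_{B_1}\frac{e^{u_i^k(w)}}{z-w}\dd_w$ from the singular part of the Green function. The remaining terms, namely $-4\int\p_z h(z,w)e^{u_i^k(w)}\dd_w$ and $\p H_i^k$, are collected into $g_i^k$. Differentiation under the integral is justified because $e^{u_i^k}\in L^1$ and the kernel $1/(z-w)$ is locally integrable. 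Alternatively, we can verify directly that $\p U_i^k-\gamma^i/z+\frac1\pi\int\frac{e^{u_i^k}}{z-w}$ has vanishing $\pb$ derivative in the distributional sense, because $\pb\frac{1}{\pi(z-w)}=\delta_w$. This shows holomorphy of $g_i^k$ directly, and is the cleaner route.

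It therefore remains to prove the uniform bounds on $g_i^k$. For the Dirichlet unit disk, $h(z,w)=-\frac1{2\pi}\log|1-z\bar w|$, so $\p_z h=\frac{\bar w}{4\pi(1-z\bar w)}$. This is holomorphic in $z$ and, for $|z|\le 1/2$, smooth and bounded with all derivatives uniformly in $w\in B_1$. Combined with the mass bound \eqref{eq:mass-bound}, that term is therefore bounded in every $C^m(B_{1/2})$.

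The main obstacle is the harmonic part $\p H_i^k$. Here only the oscillation bound \eqref{eq:boundary} is available, not a bound on the average. The fix is that $\p$ kills constants. Write $H_i^k=\bar c_i^k+\tilde H_i^k$, where $\bar c_i^k$ is the boundary average of $U_i^k$. Since $U_i^k$ is a fixed linear combination of the $u_j^k$, its boundary oscillation is bounded by $C\sum_j\osc_{\p B_1}u_j^k$, so $\|\tilde H_i^k\|_{L^\infty(\p B_1)}\le C$. The Poisson kernel then gives uniform bounds for all derivatives of $\tilde H_i^k$ on $B_{1/2}$ (indeed on any smaller disk). Moreover $\p H_i^k=\p\tilde H_i^k$ is holomorphic because $H_i^k$ is harmonic. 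Hence $g_i^k$ is holomorphic and uniformly bounded with all derivatives on $B_{1/2}$, and on each smaller disk.

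Finally, we must make sure the representation formula is valid, given the singular source and the merely continuous boundary data. The regular part $\hat u_i^k$ is continuous, so $e^{u_i^k}=|z|^{2\gamma_i}e^{\hat u_i^k}$ is integrable because $\gamma_i>-1$. Green's formula applies on $B_1\setminus B_\varepsilon$, and we let $\varepsilon\to0$ using the logarithmic behavior of $U_i^k$ at $0$. If needed, the boundary is handled by exhausting with $B_{1-\delta}$, using continuity of the boundary values.
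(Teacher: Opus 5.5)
Your proposal is correct and follows the paper's proof essentially step for step. You use Green's representation of $U_i^k$ on $B_1$, with the source contributing $2\gamma^i\log|z|$. The regular part $-\frac{1}{2\pi}\log|1-z\bar w|$ of the kernel yields the term $-\frac1\pi\int_{B_1}\frac{\bar w}{1-z\bar w}e^{u_i^k(w)}\,dA_w$, which the mass bound makes uniformly bounded. The oscillation bound then controls $\partial H_i^k$ after a constant is subtracted. Your extra remarks (the $\bar\partial$ check of holomorphy and the justification of the representation near the source and the boundary) supply details the paper leaves implicit, and they are sound.
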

\begin{proof}
From \er{eq:dual} and using the Dirichlet Green function of $-\Delta$ on $B_1$,
\[
 G(z,w)=\frac1{2\pi}\log\frac{|1-z\bar w|}{|z-w|},
\]
write
\beq\lb{old rep}
 U_i^k(z)=2\gamma^i\log|z|+4\int_{B_1}G(z,w)e^{u_i^k(w)}\dd_w+H_i^k(z),
\eeq
where $H_i^k$ is the harmonic extension of $U_i^k|_{\p B_1}$. By \eqref{eq:boundary}, $H_i^k$ is uniformly bounded after subtraction of a constant. Its positive-order interior derivatives are therefore bounded.

Differentiating the Green kernel gives
\[
 \p_zG(z,w)=-\frac1{4\pi}
 \left(\frac1{z-w}+\frac{\bar w}{1-z\bar w}\right).
\]
Thus \eqref{eq:green-gradient} holds with
\[
 g_i^k(z)=\p H_i^k(z)-\frac1\pi\int_{B_1}
 \frac{\bar w}{1-z\bar w}e^{u_i^k(w)}\dd_w.
\]
The kernel in the last integral is holomorphic in $z$ with uniformly bounded derivatives for $|z|\le1/2$ and $|w|\le1$. The mass bound \eqref{eq:mass-bound} shows uniform bounds for all derivatives of $g_i^k$.
\end{proof}

\begin{lemma}[Separated radii]\label{lem:radii}
After passage to a subsequence there are $a_k,r_k,b_k\downarrow0$ such that
\begin{equation}\label{eq:separated}
 \frac{a_k}{r_k}\longrightarrow0,\qquad
 \frac{r_k}{b_k}\longrightarrow0,
\end{equation}
and, simultaneously for all components,
\begin{equation}\label{eq:annulus-mass}
 \frac1\pi\int_{B_{a_k}}e^{u_i^k}\dd\longrightarrow\sigma_i,
 \qquad \int_{B_{b_k}\setminus B_{a_k}}e^{u_i^k}\dd\longrightarrow0.
\end{equation}
\end{lemma}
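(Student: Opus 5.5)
We build the radii by a diagonal argument from the definition \eqref{eq:mass} of $\sigma_i$ as an iterated limit. By \eqref{eq:mass-bound}, after passing to a subsequence we may assume that for each $i$ the measures $\pi^{-1}e^{u_i^k}dA$ converge weakly on $B_1$ to a measure $\nu_i$. By \eqref{eq:isolated}, away from the origin the densities $e^{u_i^k}$ are locally uniformly bounded, so $\nu_i$ restricted to $B_1\setminus\{0\}$ is absolutely continuous with bounded density on compact subsets. Hence $\nu_i=\sigma_i\delta_0+f_i\,dA$ with $f_i\in L^1(B_1)$ and $f_i$ locally bounded off $0$. In particular $\nu_i(B_r\setminus\{0\})\to0$ as $r\to0$, and $\nu_i(\partial B_r)=0$ for every $r>0$.

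\textbf{Step 1: choice of $b_k$.} For a fixed $r>0$, weak convergence together with $\nu_i(\partial B_r)=0$ gives
\[
\pi^{-1}\int_{B_r}e^{u_i^k}\dd\to\sigma_i+\nu_i(B_r\setminus\{0\})=:\sigma_i+\epsilon_i(r),
\]
where $\epsilon_i(r)\to0$ as $r\to0$. Fix $r_j=2^{-j}$. For each $j$ choose $K_j$, increasing in $j$, such that for all $k\ge K_j$ and all $i$,
\[
\Bigl|\pi^{-1}\int_{B_{r_j}}e^{u_i^k}\dd-\sigma_i-\epsilon_i(r_j)\Bigr|<2^{-j}.
\]
Set $b_k=r_j$ for $K_j\le k<K_{j+1}$. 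Then $b_k\to0$, and $\pi^{-1}\int_{B_{b_k}}e^{u_i^k}\dd\to\sigma_i$ for all $i$ simultaneously.

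\textbf{Step 2: choice of $a_k$.} On the fixed annulus $B_{b_k}\setminus B_{b_k/m}$, with $k$ large relative to $m$, the mass is close to $\nu_i(B_{b_k}\setminus \overline B_{b_k/m})$. This is not quite enough, because $b_k$ moves with $k$. Instead we use a second diagonal argument. For fixed $s>0$, weak convergence gives
\[
\pi^{-1}\int_{B_s}e^{u_i^k}\dd\to\sigma_i+\epsilon_i(s).
\]
Hence for each $m$ there is a threshold $K'_m$ such that for $k\ge K'_m$ the quantity $\pi^{-1}\int_{B_{1/m}}e^{u_i^k}\dd$ is within $1/m$ of $\sigma_i+\epsilon_i(1/m)$. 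Let $m(k)\to\infty$ slowly, so that $k\ge K'_{m(k)}$ and also $1/m(k)\le b_k$. Set $a_k=b_k\,\eta_k$, where $\eta_k\to0$ is chosen so that the annulus $B_{b_k}\setminus B_{a_k}$ carries vanishing mass.

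The cleanest way to do this is through monotonicity. The function $s\mapsto\int_{B_s}e^{u_i^k}\dd$ is nondecreasing. For fixed $k$ and $\eta\in(0,1)$, the mass of $B_{b_k}\setminus B_{\eta b_k}$ tends to $0$ as $\eta\to1$. So define
\[
\eta_k=\inf\Bigl\{\eta:\ \max_i\pi^{-1}\int_{B_{b_k}\setminus B_{\eta b_k}}e^{u_i^k}\dd\le\delta_k\Bigr\},
\]
with $\delta_k\to0$. Then $\eta_k<1$, but $\eta_k$ need not tend to zero. This is the main obstacle: the total mass $\sigma_i$ may concentrate at scales comparable to $b_k$.

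To overcome it, we use the freedom to pick $b_k$ larger. The limit measure has no mass at positive radii beyond $\epsilon_i$. So for fixed $s$ and the fixed ratio $\lambda\in(0,1)$, the mass of $B_s\setminus B_{\lambda s}$ tends to $\nu_i(B_s\setminus B_{\lambda s})\le\epsilon_i(s)$ as $k\to\infty$. Therefore we run the diagonal argument on pairs $(s_j,\lambda_j)=(2^{-j},2^{-j})$. Choose $K_j$ so that for $k\ge K_j$ and all $i$, the mass in $B_{s_j}\setminus B_{\lambda_js_j}$ is less than $\epsilon_i(s_j)+2^{-j}$, and the mass in $B_{\lambda_j s_j}$ is within $2^{-j}+\epsilon_i(\lambda_js_j)$ of $\sigma_i$. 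Set $b_k=s_j$ and $a_k=\lambda_j^2 s_j$ for $K_j\le k<K_{j+1}$. With this choice, the mass in $B_{b_k}\setminus B_{a_k}$ tends to $0$, because it is bounded by terms controlled by $\epsilon_i(s_j)+2^{-j}$. Likewise $\pi^{-1}\int_{B_{a_k}}e^{u_i^k}\dd\to\sigma_i$, and $a_k/b_k=\lambda_j^2\to0$. This uses only weak convergence on fixed annuli, so the obstacle disappears once both radii are fixed before letting $k$ grow.

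\textbf{Step 3: choice of $r_k$.} Finally set $r_k=\sqrt{a_kb_k}$. Then $a_k/r_k=r_k/b_k=\sqrt{a_k/b_k}\to0$, which gives \eqref{eq:separated}, and \eqref{eq:annulus-mass} holds by Step 2.
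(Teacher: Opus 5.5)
Your final version of Step 2 (which supersedes Step 1 and the abandoned $\eta_k$ attempt) is essentially the paper's argument: fix pairs of radii whose ratio tends to zero and on whose circles the limit measures put no mass, choose thresholds so that both disk masses are within $o(1)$ of their limits, and set $r_k=\sqrt{a_kb_k}$. The only differences are that you assign radii on blocks $K_j\le k<K_{j+1}$ instead of extracting a subsequence $k_m$, and you use \eqref{eq:isolated} to make every circle $\nu_i$-null, whereas the paper only needs that countably many circles carry mass.

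There is one slip to fix. Your thresholds control the disks of radii $s_j$ and $\lambda_js_j$, but you then set $a_k=\lambda_j^2s_j$. That leaves the mass of $B_{\lambda_js_j}\setminus B_{\lambda_j^2s_j}$ uncontrolled. Either take $a_k=\lambda_js_j$ (then $a_k/b_k=2^{-j}\to0$ already), or impose the closeness conditions at radius $\lambda_j^2s_j$ instead.
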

\begin{proof}
By the mass bound \er{eq:mass-bound}, pass to a subsequence such that the finite measures $\pi^{-1}e^{u_i^k}dA$ on $\overline B_{1/2}$ converge weakly to $\nu_i$, simultaneously for all components:
\begin{equation*}
 \frac1\pi e^{u_i^k}dA\rightharpoonup\nu_i.
\end{equation*}
See \cite[Theorem~5.1]{Billingsley}. Only countably many circles can have positive mass for any of the $\nu_i$. For every remaining radius $0<r<1/2$, weak convergence gives
\begin{equation}\label{eq:disk-mass-limit}
 \frac1\pi\int_{B_r}e^{u_i^k}\dd\longrightarrow\nu_i(B_r)
\end{equation}
by \cite[Theorem~2.1]{Billingsley}. As these radii decrease to zero, $\nu_i(B_r)\to\nu_i(\{0\})=\sigma_i$ by \er{eq:mass}.

Choose such continuity radii $b_m\downarrow0$ and $a_m\downarrow0$ with $a_m<b_m/m^2$. Choose $k_m>k_{m-1}$ so that, for every component, the normalized masses in $B_{a_m}$ and $B_{b_m}$ differ from their limiting values in \er{eq:disk-mass-limit} by at most $1/m$. Both masses then tend to $\sigma_i$, so the mass of $B_{b_m}\setminus B_{a_m}$ tends to zero. Setting $r_m=\sqrt{a_mb_m}$ gives
\[
 \frac{a_m}{r_m}=\frac{r_m}{b_m}
 =\sqrt{\frac{a_m}{b_m}}<\frac1m.
\]
Relabeling this subsequence proves \eqref{eq:separated} and \eqref{eq:annulus-mass}.
\end{proof}

\subsection{Vanishing of the rescaled density}
\begin{lemma}\label{lem:small}
For the radii of Lemma~\ref{lem:radii}, set
\beq\lb{vrho}
 v_i^k(y)=u_i^k(r_ky)+2\log r_k,\qquad
 \rho_i^k(y)=e^{v_i^k(y)}=r_k^2e^{u_i^k(r_ky)}.
\eeq
For every compact annulus $K\Subset\C^*$ and every nonnegative integer $m$,
\begin{equation}\label{eq:density-small}
 \norm{\rho_i^k}_{C^m(K)}\longrightarrow0.
\end{equation}
Moreover the derivatives of $v_i^k$ of every positive order are uniformly bounded on $K$.
\end{lemma}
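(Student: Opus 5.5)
The plan is to get smallness of $\rho_i^k$ in an integrated sense on a slightly larger annulus, upgrade it to $L^p$ and then $L^\infty$ smallness by a Brezis--Merle type argument, control $\nabla v_i^k$ through the Green representation of Lemma~\ref{lem:green}, and finally bootstrap to all $C^m$ norms.

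\textbf{Step 1: smallness of the rescaled mass.} Fix $K\Subset K'\Subset K''\Subset\C^*$, all contained in $\{R^{-1}\le|y|\le R\}$. By \eqref{eq:separated}, for large $k$ we have $a_k<r_kR^{-1}$ and $r_kR<b_k$, so $r_kK''\subset B_{b_k}\setminus B_{a_k}$. By \eqref{eq:annulus-mass} this gives
\[
\varepsilon_k:=\sum_i\int_{K''}\rho_i^k\,dA_y=\sum_i\int_{r_kK''}e^{u_i^k}\dd\longrightarrow0 .
\]
On $K''$ the functions $v_i^k$ satisfy the source-free system $\Delta v_i^k=-4\sum_ja_{ij}\rho_j^k$.

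\textbf{Step 2: $L^p$ and $L^\infty$ smallness.} Write $v_i^k=w_i^k+h_i^k$ on $K''$, where $w_i^k$ is the Newtonian potential of $-4\sum_ja_{ij}\rho_j^k\chi_{K''}$ and $h_i^k$ is harmonic. The right-hand side has $L^1$ norm at most $C\varepsilon_k\to0$. Brezis--Merle \cite{BM} then gives $\int_{K''}e^{q|w_i^k|}\le C_q$ for every fixed $q<\infty$ once $k$ is large.

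For the harmonic part, use that $e^{h/2}$ is subharmonic, together with Cauchy--Schwarz:
\[
\int_{K''}e^{h_i^k/2}\le\Bigl(\int_{K''}\rho_i^k\Bigr)^{1/2}\Bigl(\int_{K''}e^{-w_i^k}\Bigr)^{1/2}\le C\varepsilon_k^{1/2}.
\]
The mean value inequality gives $\sup_{K'}e^{h_i^k}\le C\varepsilon_k$. Hence $\rho_i^k=e^{h_i^k}e^{w_i^k}\to0$ in $L^p(K')$ for every $p<\infty$, in particular for some $p>2$.

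\textbf{Step 3: gradient bounds.} Since $u_i=\sum_ja_{ij}U_j$ is real, $\nabla v_i^k$ is determined by $r_k(\p U_j^k)(r_ky)$. I would rescale \eqref{eq:green-gradient}:
\[
r_k\p U_i^k(r_ky)=\frac{\gamma^i}{y}-\frac1\pi\int_{B_1}\frac{r_ke^{u_i^k(w)}}{r_ky-w}\dd_w+r_kg_i^k(r_ky),
\]
and split the integral into three regions.
\begin{itemize}
\item Over $B_{a_k}$: here $|r_ky-w|\ge r_k|y|/2$, so this part is bounded by $C\int e^{u}\le C$, and in fact it tends to $\sigma_i/y$.
\item Over $B_1\setminus B_{b_k}$: here $|r_ky-w|\ge b_k/2$, so this part is at most $Cr_k/b_k\to0$.
\item Over $B_{b_k}\setminus B_{a_k}$: after the change of variables $w=r_k\eta$ this becomes $\int\rho_i^k(\eta)/(y-\eta)\,dA_\eta$. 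For $\eta\in K''$ it is bounded by the $L^p$ bound ($p>2$) from Step~2, since $1/|y-\eta|\in L^{p'}$. For $\eta\notin K''$ and $y\in K'$ the kernel is bounded and the mass is $o(1)$.
\end{itemize}
Together these give $\sup_{K'}|\nabla v_i^k|\le C$.

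\textbf{Step 4: bootstrap.} The gradient bound gives a uniform Harnack bound $\sup_B\rho\le C\inf_B\rho$ on small balls. Combined with $\int\rho\to0$, this yields $\|\rho_i^k\|_{L^\infty(K')}\to0$. Then $\Delta(\p v_i^k)=-4\sum_ja_{ij}\rho_j^k\p v_j^k$ is small in $L^\infty$. Interior $W^{2,p}$ and Schauder estimates on nested annuli bound all higher derivatives of $v_i^k$ on $K$. Since $\p^m\rho$ is $\rho$ times a polynomial in the derivatives of $v$, this gives $\|\rho_i^k\|_{C^m(K)}\to0$ by induction on $m$.

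\textbf{Expected obstacle.} The main difficulty is Step~2. We have no lower or upper bound on $v_i^k$ itself, so the harmonic part must be controlled from above using only the small mass. The half-exponent subharmonicity trick is where I expect the care to be needed.
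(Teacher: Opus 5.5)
Your proof is correct, but it takes a different route from the paper.

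\textbf{The paper's route.} The paper starts from the global Green representation \eqref{old rep} and writes $v_i^k=c_i^k+H_i^k+R_i^k$. The harmonic part $H_i^k$ is bounded on $\Omega'$; this is where \eqref{eq:boundary} enters, through $\p b_i^k=\sum_j a_{ij}g_j^k$. The constant is bounded above by Jensen, $c_i^k\le\log\epsilon_k+C$. Then $e^{R_i^k}$ is bounded in $L^p$ by the Brezis--Merle Jensen argument, using $a_{ij}\le0$ for $j\ne i$ on the off-diagonal terms. A $W^{2,p}$ estimate on $R_i^k$ then yields the $C^{1,\alpha}$ (hence gradient) bound directly, and Schauder bootstrapping finishes.

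\textbf{Your route.} You get the $L^p$ smallness purely locally. You split off the local Newtonian potential, apply Brezis--Merle to $|w_i^k|$, and bound the harmonic remainder from above using subharmonicity of $e^{h_i^k/2}$ and Cauchy--Schwarz. Only afterwards do you bring in Lemma~\ref{lem:green}, to get the gradient bound by essentially the three-region splitting the paper uses later in the proof of Proposition~\ref{two rates}(\ref{part:outer}).

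\textbf{What each buys.} Your route separates the hypotheses cleanly. Smallness of the rescaled density follows from the small annular mass alone, without \eqref{eq:boundary} or the sign of the off-diagonal $a_{ij}$. The oscillation bound is needed only for the derivative bounds, which is exactly what fails in Chen's example. The paper's route gets smallness and derivative bounds from one decomposition, with the explicit rate $C\epsilon_k$.

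\textbf{One bookkeeping slip.} Step~2 gives $L^p$ smallness only on $K'$, but in Step~3 you use it for $\eta\in K''$. Split the middle integral at $K'$ instead and conclude the gradient bound on $K$ (or on an extra intermediate annulus). This is harmless, since the nested annuli can be chosen freely.
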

\begin{proof}
Choose bounded concentric annuli
\[
 K\Subset\Omega''\Subset\Omega'\Subset\Omega\Subset\C^*.
\]
By \eqref{eq:separated}, $r_k\Omega\subset B_{b_k}\setminus B_{a_k}$ for large $k$. Hence by \er{eq:annulus-mass}, 
\beq\lb{small rho}
 \epsilon_k=\sum_j\int_\Omega\rho_j^k\dd =\sum_j\int_\Omega e^{v_j^k}\dd = \sum_j \int_{r_k\Omega} e^{u_j^k}\dd\longrightarrow0.
\eeq

On $\Omega$, the rescaled equation is
\[
 \Delta v_i^k=-4\sum_j a_{ij}\rho_j^k.
\]

Multiplying \er{old rep} by $a_{ij}$ and summing over $j$, we write
\[
 u_i^k(x)=2\gamma_i\log|x|+b_i^k(x)
 -\frac2\pi\sum_j a_{ij}\int_{B_1}\log|x-z|e^{u_j^k(z)}\dd_z,
\]
where $b_i^k$ is harmonic on $B_1$ and includes the harmonic boundary extension and the regular part of the Green kernel. By Lemma~\ref{lem:green},
$\p b_i^k=\sum_j a_{ij}g_j^k$, so its positive-order derivatives are uniformly bounded on smaller disks.
Set $x=r_ky$ and split the integral over $r_k\Omega$ and $B_1\setminus r_k\Omega$. In the first part, the substitution $z=r_k\eta$ gives
\[
 \int_{r_k\Omega}\log|r_ky-z|e^{u_j^k(z)}\dd_z
 =\log r_k\int_\Omega\rho_j^k\dd
  +\int_\Omega\log|y-\eta|\rho_j^k(\eta)\dd_\eta.
\]
Consequently, by the definition \er{vrho}, 
\begin{equation}\label{eq:decomposition}
 v_i^k=c_i^k+H_i^k+R_i^k,\qquad
 R_i^k(y)=-\frac2\pi\sum_j a_{ij}
                 \int_\Omega\log|y-\eta|\rho_j^k(\eta)\dd_\eta.
\end{equation}
To specify the harmonic term and the constant, fix $y_0\in\Omega'$ and put
\begin{equation}\label{eq:harmonic-remainder}
 \begin{aligned}
 \widetilde H_i^k(y)
 &=2\gamma_i\log|y|+b_i^k(r_ky)\\
 &\quad-\frac2\pi\sum_j a_{ij}
 \int_{B_1\setminus r_k\Omega}
       \log|r_ky-z|e^{u_j^k(z)}\dd_z,\\
 H_i^k(y)&=\widetilde H_i^k(y)-\widetilde H_i^k(y_0),\\
 c_i^k&=\left(2+2\gamma_i-\frac2\pi\sum_j a_{ij}
                  \int_\Omega\rho_j^k\dd\right)\log r_k
                  +\widetilde H_i^k(y_0).
 \end{aligned}
\end{equation}
Every term in $\widetilde H_i^k$ in \er{eq:harmonic-remainder} is harmonic on $\Omega$: the annulus avoids zero, and $z/r_k\notin\Omega$ in the exterior integral. For $y\in\Omega'$, the distance from $y$ to $z/r_k$ is bounded below. The mass bound \er{eq:mass-bound} therefore bounds the gradient in $y$ of the exterior potential. Together with the bound for $\p b_i^k$ and the normalization $H_i^k(y_0)=0$, this gives
\beq\lb{hi bound}
 \|H_i^k\|_{L^\infty(\Omega')}\le C.
\eeq
Harmonic interior estimates applied to \er{hi bound} then bound all derivatives of $H_i^k$ on smaller annuli.

For every finite $p\ge1$, Minkowski's inequality and \er{small rho} give
\begin{equation}\label{eq:R-Lp}
 \begin{aligned}
 \norm{R_i^k}_{L^p(\Omega')}
 &\le\frac2\pi\sum_j |a_{ij}|\int_\Omega
   \|\log|\cdot-\eta|\|_{L^p(\Omega')}
   \rho_j^k(\eta)\dd_\eta\\
 &\le C_p\epsilon_k,
 \end{aligned}
\end{equation}
since $\sup_{\eta\in\Omega}\|\log|\cdot-\eta|\|_{L^p(\Omega')}<\infty$.

We first control the constant $c_i^k$ by averaging, and then control $e^{R_i^k}$ by the small mass of its logarithmic potential. Using \er{hi bound} and the $p=1$ case of \er{eq:R-Lp}, averaging \eqref{eq:decomposition} and applying Jensen's inequality gives
\begin{equation}\label{eq:c-small}
 c_i^k\le \frac1{|\Omega'|}\int_{\Omega'}{v_i^k}\dd + C \le \log\left(\frac1{|\Omega'|}\int_{\Omega'}e^{v_i^k}\dd\right)+C
 \le\log\epsilon_k+C.
\end{equation}
This controls the additive constants.

To control $e^{R_i^k}$, fix $p>1$, and consider the off-diagonal terms in \eqref{eq:decomposition}.
Because $a_{ij}\le0$ for $j\ne i$ and $\log|y-\eta|$ is bounded above, 
$$
-\frac{2}{\pi}\sum_{j\neq i}a_{ij} \int_\Omega \log|y-\eta| \rho_j^k(\eta)\dd_\eta \le C\sum_{j\ne i}\int_\Omega\rho_j^k(\eta)\dd_\eta
\le C\epsilon_k. 
$$
For the diagonal term, set $m_i^k=\int_\Omega\rho_i^k$ and choose $C_0>\operatorname{diam}\Omega$. This makes the logarithmic kernel positive; its insertion changes the diagonal potential by a constant of size $O(m_i^k)$. Jensen's inequality, with the probability measure $\rho_i^k\dd/m_i^k$, gives
\begin{equation}\label{eq:BM}
 \exp\left(\frac{4p}{\pi}\int_\Omega
       \log\frac{C_0}{|y-\eta|}\rho_i^k(\eta)\dd_\eta\right)
 \le\frac1{m_i^k}\int_\Omega
       \left(\frac{C_0}{|y-\eta|}\right)^{4pm_i^k/\pi}
       \rho_i^k(\eta)\dd_\eta.
\end{equation}
Put $\theta_k=4pm_i^k/\pi$. For each $\eta\in\Omega$,
\beq\lb{om'}
 \int_{\Omega'}\left(\frac{C_0}{|y-\eta|}\right)^{\theta_k}\dd_y
 \le 2\pi C_0^{\theta_k}\int_0^{C_0}r^{1-\theta_k}\,dr
 =\frac{2\pi C_0^2}{2-\theta_k}.
\eeq
Thus $\theta_k<2$ suffices for integrability. Since $\theta_k\to0$, we may take $\theta_k\le1$ for large $k$, giving a uniform bound of \er{om'}. Integrating \eqref{eq:BM} over $\Omega'$ with respect to $y$ and using $\int_\Omega\rho_i^k/m_i^k=1$, together with the off-diagonal bound, yields 
\begin{equation}\label{eq:exp-R-bound}
\|e^{R_i^k}\|_{L^p(\Omega')}\le C_p,
\end{equation}
 as in \cite{BM}. Combining \er{eq:exp-R-bound}, \er{eq:c-small}, and \er{hi bound} in the decomposition \er{eq:decomposition} gives 
\begin{equation}\label{eq:Lp-small}
\norm{\rho_i^k}_{L^p(\Omega')}= \norm{e^{v_i^k}}_{L^p(\Omega')}\le C_p\epsilon_k.
\end{equation}

Since $\Delta R_i^k=-4\sum_j a_{ij}\rho_j^k$ on $\Omega'$, the interior elliptic estimate, together with \er{eq:R-Lp} and \er{eq:Lp-small}, gives, for fixed $p>2$,
\begin{equation}\label{eq:R-W2p}
 \|R_i^k\|_{W^{2,p}(\Omega'')}
 \le C_p\left(\|R_i^k\|_{L^p(\Omega')}
       +\sum_j\|\rho_j^k\|_{L^p(\Omega')}\right)
 \le C_p\epsilon_k.
\end{equation}
Sobolev embedding gives $R_i^k\to0$ in $C^{1,\alpha}(\Omega'')$ for $0<\alpha<1-2/p$. Put $w_i^k=v_i^k-c_i^k=H_i^k+R_i^k$. The harmonic bound \er{hi bound} and the potential estimate \er{eq:R-W2p} give a uniform $C^{1,\alpha}$ bound for $w_i^k$. Since $c_i^k$ is spatially constant,
\[
 \Delta w_i^k=-4\sum_j a_{ij}e^{c_j^k}e^{w_j^k}.
\]
By \er{eq:c-small}, the right side is uniformly bounded in $C^{1,\alpha}(\Omega'')$. Repeated interior Schauder estimates on smaller annuli give uniform $C^m(K)$ bounds for $w_i^k$. Finally,
\[
 \rho_i^k=e^{c_i^k}e^{w_i^k},\qquad
 \|\rho_i^k\|_{C^m(K)}\le C_m e^{c_i^k}
 \le C_m\epsilon_k\longrightarrow0.
\]
\end{proof}

\subsection{Outer asymptotics}
\begin{proof}[Proof of Proposition~\ref{two rates}(\ref{part:outer})]
From \eqref{eq:green-gradient}, we obtain
\begin{equation}\label{eq:rescaled-green}
r_k\p U_i^k(r_ky)
=\frac{\gamma^i}{y}
-\frac1\pi\int_{B_1}
  \frac{e^{u_i^k(w)}}{y-w/r_k}\,dA_w
+r_k g_i^k(r_ky).
\end{equation}
Split the integral in \er{eq:rescaled-green} into $B_{a_k}$, $B_{b_k}\setminus B_{a_k}$, and $B_1\setminus B_{b_k}$. On a compact annulus in $y$, the first piece is
\[
 \frac1\pi\int_{B_{a_k}}\frac{e^{u_i^k(w)}}{y-w/r_k}\dd_w
 =\frac{\sigma_i+o(1)}{y}+O(a_k/r_k).
\]
Indeed, uniformly on a fixed compact annulus in $y$,
\[
 \frac1{y-w/r_k}=\frac1y+O(a_k/r_k)\qquad (|w|\le a_k).
\]
By \er{eq:annulus-mass}, the normalized enclosed mass tends to $\sigma_i$, so integrating
the kernel expansion gives the stated estimate.
 For the exterior piece, $|w|\ge b_k$ and $r_k/b_k\to0$ give
\[
 \left|\frac{r_k}{r_ky-w}\right|\le\frac{2r_k}{|w|}\le\frac{2r_k}{b_k}.
\]
The exterior integral is therefore $O(r_k/b_k)$ by \er{eq:mass-bound}. For the middle piece, fix $\delta>0$ so that $B_\delta(y)$ stays in a compact annulus. On this ball, the density bound \er{eq:density-small} gives
\[
 \int_{B_\delta(y)}\frac{\rho_i^k(\eta)}{|y-\eta|}\,dA_\eta
 \le 2\pi\delta\sup_{B_\delta(y)}\rho_i^k=o(1).
\]
On its complement the kernel is bounded by $\delta^{-1}$ and the mass tends to zero by \eqref{eq:annulus-mass}. Also $r_kg_i^k(r_ky)\to0$ by Lemma~\ref{lem:green}. Substituting these bounds into \er{eq:rescaled-green} gives
\[
 \p_yU_i^k(r_ky)\longrightarrow\frac{\gamma^i-\sigma_i}{y},
\]
uniformly on compact annuli, proving \eqref{eq:outer-jets} for $m=1$.

Set $F_i^k(y)=U_i^k(r_ky)-2(\gamma^i-\sigma_i)\log|y|$ and normalize by
\[
 \widetilde F_i^k(y)=F_i^k(y)-F_i^k(y_0)
\]
at a fixed point $y_0$. Since $F_i^k$ is real-valued, the convergence 
$$\p_yF_i^k = r_k\p U_i^k(r_k y) - \frac{\gamma^i-\sigma_i}{y}\to0$$ controls its full gradient. Integrating along paths in a fixed annulus gives $\widetilde F_i^k\to0$ locally uniformly. Subtracting the constant preserves the equation:
\[
 \Delta_y\widetilde F_i^k=-4\rho_i^k.
\]
On nested compact annuli $K\Subset K'$, interior Schauder estimates give
\[
 \|\widetilde F_i^k\|_{C^{m+2,\alpha}(K)}
 \le C\left(\|\widetilde F_i^k\|_{C^0(K')}
             +\|\rho_i^k\|_{C^{m,\alpha}(K')}\right)\longrightarrow0.
\]
Here the H\"older convergence of $\rho_i^k$ follows from \er{eq:density-small}. This proves the remaining derivatives in \eqref{eq:outer-jets}.
\end{proof}

\section{W-invariants and invariant polynomials}\label{sec:currents}\label{sec:algebra}

\subsection{The shifted invariant polynomials}
\begin{proof}[Proof of Proposition~\ref{thm:constraints}]
Let $G$ be the adjoint group of $\g$. Chevalley restriction and Kostant's restriction theorem give two isomorphisms
\begin{equation}\label{eq:invariant-rings}
 \bc[\fh_\bc]^W\xleftarrow{\ \sim\ }\bc[\fg]^G
 \xrightarrow{\ \sim\ }\bc[f+\s],
\end{equation}
where the slice $\fs$ is defined in \er{slice}; 
see \cite{Kostant,Kostant-Whittaker}. We use the same symbols $I_a$ for the adjoint-invariant polynomials and their restrictions to $\fh_\bc$, so
$\bc[\fh_\bc]^W=\bc[I_1,\ldots,I_n]$.
The condition \er{eq:invariant-constraints} is independent of the choice of basic generators. By \er{eq:invariant-rings}, we may choose homogeneous generators normalized by
\begin{equation}\label{eq:slice-normalization}
 I_a\left(f+\sum_b t_bs_b\right)=t_a.
\end{equation}

The shift by $\rh$ below is the same gauge contribution that occurs in the residues of regular-singular opers; see \cite[Sections~9.1--9.2, especially Proposition~9.1.1]{Frenkel-loop}. We give the calculation in our conventions.

On a chosen branch of $\log z$, define
\[
 z^{\rh}:=\exp((\log z)\rh).
\]
Its adjoint action on $\g_m$ \er{grading} is multiplication by $z^m$, hence is single-valued.
By \eqref{homog}, the coordinate of $M_H$ at a root of height $m$ is a constant multiple of $z^{-m}$. Hence
\[
 N_H=z^{\rh}M_H(z)z^{-\rh}
\]
is constant. Using
\[
 z^{\rh}\p z^{-\rh}=\p-\frac{\rh}{z},\qquad
 \Ad(z^{\rh})f=z^{-1}f,\qquad
 \Ad(z^{\rh})s_a=z^{d_a-1}s_a
\]
in \eqref{def ca}, and comparing the coefficients of $z^{-1}$, gives
\begin{equation}\label{eq:residue-conjugacy}
 \Ad(N_H)(f-H-\rh)=f-\rh+\sum_a c_a(H)s_a.
\end{equation}
For every $I\in\bc[\fg]^G$ and $X\in\fh_\bc$,
\[
 I(f-X)=I(-X),
\]
since $\Ad(t^{-\rh})(f-X)=tf-X\to-X$ as $t\to0$.
Put $c(H)=(c_1(H),\ldots,c_n(H))$ and
\[
 F_a(t)=I_a\left(f-\rh+\sum_b t_bs_b\right).
\]
Assign weight $d_b$ to $t_b$. The polynomial
$I_a(f-u\rh+\sum_b t_bs_b)$ is weighted homogeneous of degree $d_a$ when $u$ has weight one, as follows by applying $\lambda\Ad(\lambda^{\rh})$ to its argument. At $u=0$ it equals $t_a$ by \er{eq:slice-normalization}. Consequently,
\begin{equation*}
 F_a(t)=t_a+P_a(t_b:d_b<d_a),
\end{equation*}
where $P_a$ has weighted degree strictly less than $d_a$. Thus 
\beq\lb{F iso}
F:\bc^n\to \bc^n\quad\text{is a triangular polynomial automorphism}
\eeq

Now \er{eq:residue-conjugacy} and homogeneity give
\begin{equation*}
 F_a(c(H))=I_a(f-H-\rh)=(-1)^{d_a}I_a(H+\rh).
\end{equation*}
Therefore, the automorphism \er{F iso} gives the equality 
\begin{equation*}
 \bc[c_1(H),\ldots,c_n(H)]
 =\bc[I_1(H+\rh),\ldots,I_n(H+\rh)],
\end{equation*}
proving the equivalence of \er{eq:c-constraints} and \er{eq:invariant-constraints}. 

Since invariant polynomials separate the orbits of the finite Weyl group, \er{eq:invariant-constraints} is equivalent to \er{gen Weyl}.
\end{proof}

\subsection{Conservation of the leading coefficients}
The $W$-invariants in \eqref{w inv} are holomorphic away from the sources by \cite{Nie} and \cite[Lemma~4.1(iii)]{KLNW}. The following local residue statement is the part of \cite[Proposition~5.1 and Theorem~5.3]{KLNW} needed here.

\begin{proposition}\label{prop:conservation}\label{lem:source}
For a fixed solution $u$ of \eqref{eq:toda}, put $U=A^{-1}u$. Then $W_a[U]$ is meromorphic at zero with pole order at most $d_a$, and
\begin{equation}\label{eq:source-residue}
 \lim_{z\to0}z^{d_a}W_a[U](z)=c_a(p_\gamma).
\end{equation}
Consequently, for every $0<r<1$, 
\begin{equation}\label{eq:contour}
 \frac1{2\pi i}\int_{|z|=r}z^{d_a-1}W_a[U](z)\,dz=c_a(p_\gamma).
\end{equation}
\end{proposition}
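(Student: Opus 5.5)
The plan is to combine the holomorphicity of $W_a[U]$ on $B_1\setminus\{0\}$ with the inner asymptotics of Proposition~\ref{two rates}(\ref{part:source}), and then use homogeneity of the differential polynomial $S_a$ to identify the limit.

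\emph{Step 1: evaluate the rescaled invariant.} Fix $y$ in a compact annulus and set $\phi_i^{(r)}(y)=r\,\p U_i(ry)$. Since $S_a$ is weighted homogeneous of weight $d_a$, with weight $m+1$ assigned to $\p^m\phi_i$, the chain rule gives
\[
 r^{d_a}W_a[U](ry)=S_a\bigl(\phi^{(r)}_1,\ldots,\phi^{(r)}_n\bigr)(y),
\]
where the right side is taken with $y$-derivatives. Indeed $\p_y^m\phi^{(r)}_i(y)=r^{m+1}(\p^{m+1}U_i)(ry)$.

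By \eqref{eq:source-jets}, for each $m\ge0$,
\[
 \p_y^m\phi^{(r)}_i(y)\to(-1)^{m}m!\,\gamma^i/y^{m+1}=\p_y^m(\gamma^i/y)
\]
locally uniformly as $r\to0$. Since $S_a$ is a polynomial in finitely many jets, we get
\[
 r^{d_a}W_a[U](ry)\to S_a(\gamma^1/y,\ldots,\gamma^n/y).
\]
With $H=\sum_i\gamma^ih_i=p_\gamma$, this limit is $c_a(p_\gamma)\,y^{-d_a}$ by the definition preceding \eqref{def ca}.

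\emph{Step 2: meromorphy and the limit.} Take $y=e^{i\theta}$, which lies in a fixed compact annulus, and use uniformity in $\theta$. This gives $\sup_{|z|=r}|z^{d_a}W_a[U](z)-c_a(p_\gamma)|\to0$. In particular $|W_a[U](z)|\le C|z|^{-d_a}$ near $0$. Riemann's removable singularity theorem, applied to $z^{d_a}W_a[U]$, which is holomorphic on the punctured disk and bounded, shows that $z^{d_a}W_a[U]$ extends holomorphically to $0$. So $W_a[U]$ is meromorphic with pole order at most $d_a$. Its value at $0$ is the uniform limit just computed, which proves \eqref{eq:source-residue}.

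\emph{Step 3: the contour integral.} The function $g(z)=z^{d_a}W_a[U](z)$ is holomorphic on $B_1$ after the extension. Hence
\[
 \frac1{2\pi i}\int_{|z|=r}z^{d_a-1}W_a[U]\,dz=\frac1{2\pi i}\int_{|z|=r}\frac{g(z)}{z}\,dz=g(0)=c_a(p_\gamma)
\]
for every $0<r<1$, which is \eqref{eq:contour}.

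\emph{Main obstacle.} The delicate point is Step 1: checking that the scaling identity for $S_a$ holds exactly. This relies on the weight assignment, which is equivalent to invariance of the gauge construction under $z\mapsto rz$ combined with $\Ad(r^{\rh})$, using the homogeneity \eqref{homog}. It also relies on having convergence of all jets up to order $d_a-1$, not just $m=1$, and this is exactly what Proposition~\ref{two rates}(\ref{part:source}) supplies.
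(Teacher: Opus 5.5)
Your proposal is correct and follows the paper's proof essentially step for step. You rescale using the source jets of Proposition~\ref{two rates}(\ref{part:source}) and the weighted homogeneity of $S_a$ to get $r^{d_a}W_a[U](ry)\to c_a(p_\gamma)y^{-d_a}$ uniformly on $|y|=1$, and then apply the removable singularity theorem to $z^{d_a}W_a[U]$ and Cauchy's formula. Your exact scaling identity $r^{d_a}W_a[U](ry)=S_a(\phi^{(r)})(y)$ is a cleaner packaging of the paper's remark that the derivative orders in each monomial sum to $d_a$.
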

\begin{proof}
By \eqref{eq:source-jets}, for every $m\ge1$,
\[
 r^m\p^mU_i(ry)\longrightarrow
 (-1)^{m-1}(m-1)!\frac{\gamma^i}{y^m}
 \qquad(r\to0),
\]
uniformly on $|y|=1$. Each monomial in $W_a$ is a product of derivatives $\p^mU_i$ whose orders sum to $d_a$. Multiplying by $r^{d_a}$ therefore distributes one factor $r^m$ to each derivative. The limiting derivatives are those of the model with $\p U_i=\gamma^i/y$, so the definitions \er{w inv} and \er{def ca} give
\begin{equation}\label{eq:source-current-limit}
 r^{d_a}W_a[U](ry)\longrightarrow
 c_a(p_\gamma)y^{-d_a}
\end{equation}
uniformly on $|y|=1$.

Put $F_a(z)=z^{d_a}W_a[U](z)$. Multiplying \er{eq:source-current-limit} by $y^{d_a}$ gives $F_a(ry)\to c_a(p_\gamma)$ uniformly on $|y|=1$. Since every $z\ne0$ can be written as $z=ry$ with $r=|z|$ and $|y|=1$, this proves \eqref{eq:source-residue}. The function $F_a$ is holomorphic on $B_1\setminus\{0\}$ by the holomorphicity of the $W$-invariants and bounded near zero, hence extends holomorphically to $B_1$ with $F_a(0)=c_a(p_\gamma)$. Thus $W_a[U]=z^{-d_a}F_a$ is meromorphic with pole order at most $d_a$. For every $0<r<1$, Cauchy's formula now gives
\[
 \frac1{2\pi i}\int_{|z|=r}z^{d_a-1}W_a[U](z)\,dz
 =\frac1{2\pi i}\int_{|z|=r}\frac{F_a(z)}z\,dz
 =F_a(0)=c_a(p_\gamma),
\]
which is \eqref{eq:contour}.
\end{proof}

\begin{proof}[Proof of Theorem~\ref{thm:main}]
On the radii of Proposition~\ref{two rates}(\ref{part:outer}), weighted homogeneity and \eqref{eq:outer-jets} give, uniformly on $|y|=1$,
\beq\lb{rk conv}
 r_k^{d_a}W_a[U^k](r_ky)\longrightarrow c_a(p_\gamma-q_\sigma)y^{-d_a}.
\eeq
Apply \eqref{eq:contour} to each $U^k$ and set $z=r_ky$, with $r=r_k$. We obtain
\[
c_a(p_\gamma)
=\frac{1}{2\pi i}\int_{|y|=1}
y^{d_a-1}r_k^{d_a}W_a[U^k](r_ky)\,dy.
\]
By the uniform convergence \er{rk conv} on $|y|=1$, the right-hand side tends to
\[
\frac{1}{2\pi i}\int_{|y|=1}
y^{d_a-1}c_a(p_\gamma-q_\sigma)y^{-d_a}\,dy
=c_a(p_\gamma-q_\sigma).
\]
Thus \eqref{eq:c-constraints} holds with $H=p_\gamma$ and $H'=p_\gamma-q_\sigma$ as 
\beq\lb{cp}
c_a(p_\gamma)=c_a(p_\gamma-q_\sigma),\quad a=1,\dots, n.
\eeq

We comment that, in comparing the two asymptotics, only the leading Laurent coefficient is conserved; the lower coefficients may depend on $k$.

Proposition~\ref{thm:constraints}, with $p_\mu=p_\gamma+\rh$ from \er{eq:parameters}, gives $p_\mu-q_\sigma=wp_\mu$. Applying the fundamental weights gives \eqref{eq:Gamma}.
\end{proof}

\subsection{Realization of the mass vectors}\label{sec:realization}
For $w\in W$, the family in \cite[Theorem~1.7]{Nie-masses} with $\tau=w^{-1}$ has local masses
\[
 \sigma_i=\langle\omega_i-\tau\omega_i,p_\mu\rangle
          =\omega_i(p_\mu-wp_\mu).
\]
Radiality gives \eqref{eq:boundary}, and the fixed entire masses \cite[Theorem~1.1]{KLNW} give \eqref{eq:mass-bound}. With $H=\tau H_0$ and $H_0$ in the positive chamber, the fixed-annulus asymptotics of \cite[Section~3]{Nie-masses} give $u_i^{\lambda,H}=-2\lambda\alpha_i(H_0)+O_K(1)$ on $K\Subset\C^*$, proving \eqref{eq:isolated}. Thus every vector in \eqref{eq:Gamma} is attained, including zero. A nonzero atom forces blowup in the sense of \eqref{blowup}, since a uniform bound $e^{u_i^k}\le C|z|^{2\gamma_i}$ near zero would preclude concentration, by $\gamma_i>-1$. 

\subsection{The mass reflections}
The simple reflection is $s_iH=H-\alpha_i(H)h_i$, $H\in \fh_\bc$. The action $s_i\star_\mu q_\sigma$ in \eqref{eq:mass-action} is 
\begin{align*}
s_i\star_\mu q_\sigma &= p_\mu - s_i(p_\mu-q_\sigma) = p_\mu-(p_\mu-q_\sigma - \la \al_i, p_\mu-q_\sigma\ra h_i)\\
&=q_\sigma + \Big(\mu_i-\sum_j a_{ij}\sigma_j\Big)h_i,
\end{align*}
by \er{eq:parameters}. 
Applying the fundamental weights gives the mass reflection of \cite{LYZ}, after the normalization \er{eq:LYZ-normalization}: 
\begin{equation}\label{eq:mass-reflection}
 (R_i\sigma)_j=\sigma_j\quad(j\ne i),\qquad
 (R_i\sigma)_i=\sigma_i+\mu_i-\sum_j a_{ij}\sigma_j.
\end{equation}
Thus $\Gamma_A(\mu)$ is exactly the orbit of zero under the affine reflections \er{eq:mass-reflection}. 

Because $p_\mu$ lies in the open positive chamber, its stabilizer in $W$ is trivial. Hence $\Gamma_A(\mu)$ has $|W|$ distinct vectors. Each has nonnegative entries: $\omega_i-w^{-1}\omega_i$ is a nonnegative integral combination of simple roots, and
\[
 \omega_i(p_\mu-wp_\mu)=(\omega_i-w^{-1}\omega_i)(p_\mu)
\]
is accordingly a nonnegative integral combination of the positive $\mu_j$.

\subsection{The quadratic restriction}\label{sec:pohozaev}

Fix a $W$-invariant positive definite inner product on $\h_{\R}$, and extend it complex-bilinearly. With the convention \eqref{eq:cartan},
\begin{equation*}
 \ell_i=\tfrac12\norm{h_i}^2,\qquad
 \ip{h_i}{H}=\ell_i\alpha_i(H),\qquad
 \ip{h_i}{h_j}=\ell_i a_{ij}.
\end{equation*}
Thus $DA$ is the coroot Gram matrix of the inner product, where $D=\operatorname{diag}(\ell_1,\ldots,\ell_n)$.  

Choose the quadratic invariant $I_2(H)=\norm{H}^2$.
Applying \er{eq:invariant-constraints} with $H=p_\gamma$ and
$H'=p_\gamma-q_\sigma$, and using $p_\mu=p_\gamma+\rh$ from
\er{eq:parameters}, gives
\begin{equation}\label{eq:c2}
 I_2(p_\mu-q_\sigma)=I_2(p_\mu).
\end{equation}
Expanding \er{eq:c2}, we obtain
\[
 \norm{q_\sigma}^2=2\ip{p_\mu}{q_\sigma}.
\]
Since $q_\sigma=\sum_i\sigma_i h_i$, $p_\mu=\sum_i\mu_iE_i$, and $\ip{E_i}{h_j}=\ell_j\delta_{ij}$, this is
\begin{equation}\label{eq:PI-intro}
 \boxed{\ \sigma^TDA\sigma=2\mu^TD\sigma.\ }
\end{equation}
For type $A_n$, normalize $D=\Id$. Then \er{eq:PI-intro} becomes
\beq\lb{eq:An-quadratic}
 \sum_i\sigma_i^2-\sum_{i=1}^{n-1}\sigma_i\sigma_{i+1}
 =\sum_i\mu_i\sigma_i.
\eeq
Under \eqref{eq:LYZ-normalization}, \er{eq:An-quadratic} is precisely the Pohozaev identity \cite[(1.19)]{LYZ}. The symmetrizer $D$ gives the corresponding quadratic form in the other types. Unlike the higher residue constraints used here, the Pohozaev identity also holds under the variable-coefficient hypotheses of \cite{LYZ}.

\section{Examples and explicit constraints}\label{sec:examples}

The bounded boundary oscillation condition \er{eq:boundary} is
essential to both our result and that of \cite{LYZ}. We first
write out Chen's example \cite{Chen}, which violates this
condition and violates the Pohozaev constraint, while still
satisfying the orbit-plus-integer restriction.

The hypotheses \er{eq:isolated}, \er{eq:boundary}, and
\er{eq:mass-bound} still allow several concentrating centers to
collapse to zero. Our choice of $r_k$ in Lemma~\ref{lem:radii}
encloses the whole local mass. The example in Section~\ref{sec:rank-one} illustrates this possibility.

Finally, Section~\ref{sec:A2} shows the additional restriction
supplied by the cubic invariant.

\subsection{Chen's example with unbounded boundary oscillation}\label{sec:chen}
Chen's construction \cite[Theorems~1--3]{Chen} gives two collapsing bubbles with no prescribed source. We write it in our normalization and check directly that \eqref{eq:boundary} fails.

Following \cite[p.~300]{Chen}, choose an entire function $g$ with
\[
 g'(\zeta)=\frac{e^{\pi i(1-\zeta)}+1-2\zeta}{\zeta(\zeta-1)},
 \qquad f(\zeta)=e^{g(\zeta)}\zeta(\zeta-1).
\]
The apparent singularities of $g'$ at $0$ and $1$ are removable. The function $f$ has exactly two simple zeros, at $0$ and $1$, while
\[
 f'(\zeta)=e^{g(\zeta)+\pi i(1-\zeta)}\ne0.
\]
For $\lambda>0$, put
\[
 D_\lambda(\zeta)=\frac{\lambda^2|f'(\zeta)|^2}
                         {(1+\lambda^2|f(\zeta)|^2)^2}.
\]
On each fixed disk containing $0$ and $1$, $D_\lambda dA\rightharpoonup\pi(\delta_0+\delta_1)$: a change of variables near each simple zero gives mass $\pi$, and $D_\lambda\to0$ uniformly away from the zeros.

Choose $\lambda_k\to\infty$ sufficiently fast that
\begin{equation}\label{eq:chen-scale}\begin{gathered}
 \left|\int_{B_k}D_{\lambda_k}\dd-2\pi\right|\le k^{-1},\\
 \sup_{\overline B_k\setminus(B_{1/4}(0)\cup B_{1/4}(1))}
 D_{\lambda_k}\le k^{-4},\\
\lambda_k\min_{|\zeta|=k}|f(\zeta)|\to\infty.
\end{gathered}\end{equation}
Rescale by
\begin{equation*}
 e^{u^k(z)}=k^2D_{\lambda_k}(kz),\qquad |z|\le1.
\end{equation*}
These are smooth solutions of $\Delta u^k+8e^{u^k}=0$, since $f'$ never vanishes. Their disk masses tend to $2\pi$, their densities tend to zero on compact subsets of $\overline B_1\setminus\{0\}$, and the two centers $0$ and $1/k$ collapse to zero. Consequently $\sigma=2$, although $\gamma=0$ and the Weyl set is $\{0,1\}$. The mass belongs to $\{0,1\}+\mathbb Z$, so it does not contradict the orbit-plus-integer structure in \cite{LYZ}. It violates the Pohozaev identity $\sigma^2=\sigma$, the rank-one case of \er{eq:An-quadratic}. The boundary-oscillation hypothesis required for that identity fails, as we now check.

The boundary growth condition in \er{eq:chen-scale} gives,
uniformly on $|z|=1$,
\[
 u^k(z)=2\log k-2\log\lambda_k
       +2\log|f'(kz)|-4\log|f(kz)|+o(1).
\]
Since
\[
 \frac{f'(\zeta)}{f(\zeta)^2}
 =\frac{e^{-g(\zeta)+\pi i(1-\zeta)}}{\zeta^2(\zeta-1)^2},
\]
putting $G(\zeta)=-g(\zeta)+\pi i(1-\zeta)$ yields
\beq\lb{eq:chen-boundary}
 u^k(z)=2\log k-2\log\lambda_k
       +2\operatorname{Re}G(kz)-4\log|kz(kz-1)|+o(1).
\eeq
The term $-4\log|kz(kz-1)|$ in \er{eq:chen-boundary} has
oscillation $4\log((k+1)/(k-1))=O(k^{-1})$ on this circle.

On the other hand, $g'(0)=2-\pi i$ by the L'Hopital rule, so $G'(0)=-2$.
Thus, for $F_k(\theta)=2\operatorname{Re}G(ke^{i\theta})$,
the first Fourier coefficient is
\[
 \frac1{2\pi}\int_0^{2\pi}F_k(\theta)e^{-i\theta}\,d\theta=-2k.
\]
Let $a_k=(\max F_k+\min F_k)/2$. Subtracting this constant
does not change the Fourier coefficient, and hence
\[
 2k
 \le \frac1{2\pi}\int_0^{2\pi}|F_k(\theta)-a_k|\,d\theta
 \le \frac12\osc F_k.
\]
The remaining terms in \er{eq:chen-boundary} are constant
on $\p B_1$ or have vanishing oscillation. Therefore
\[
 \osc_{\p B_1}u^k\ge 4k-O(k^{-1})-o(1)\longrightarrow\infty.
\]

Thus \eqref{eq:isolated} and \eqref{eq:mass-bound} hold, but
\eqref{eq:boundary} fails. In particular, the harmonic-term
estimate in Lemma~\ref{lem:green} cannot be deduced from the
mass bound alone.

\subsection{Two selection centers}\label{sec:rank-one} 
For $A=(2)$, $U=u/2$, $h=\diag(1, -1)$, $p_\gamma=(\gamma/2)h$ and $p_\mu = (\mu/2)h$ with $\mu=1+\gamma>0$. 

The two Weyl-group elements give $p_\mu-p_\mu=0$ and $p_\mu-s_1p_\mu=2p_\mu=\mu h$, so 
$$
\sigma \in \{0, \mu\}.
$$

For example, the standard singular radial solution
\[
 e^{u(r)}=\frac{\mu^2\lambda^2 r^{2\mu-2}}
                    {(1+\lambda^2r^{2\mu})^2}
\]
solves $\Delta u+8e^u=4\pi\gamma\delta_0$ and has $\pi^{-1}\int_\C e^u=\mu$, directly checking the normalization.

The bounded-oscillation hypothesis allows more than one concentrating center. For example, take $\gamma=1$, set $a_k=k^{-1}$ and $\lambda_k=k^3$, and define
\begin{equation}\label{eq:two-center}
 e^{u^k(z)}=
 \frac{4\lambda_k^2|z|^2}
      {(1+\lambda_k^2|z^2-a_k^2|^2)^2}.
\end{equation}
The density in \er{eq:two-center} is the pullback density $|F_k'|^2/(1+|F_k|^2)^2$ for $F_k(z)=\lambda_k(z^2-a_k^2)$. Since $F_k'$ has a simple zero only at zero, \eqref{eq:two-center} solves
\[
 \Delta u^k+8e^{u^k}=4\pi\delta_0.
\]
The map $F_k$ has degree two, so change of variables gives $\int_\C e^{u^k}\dd=2\pi$. The two centers are $a_k$ and $-a_k$. Their common bubble scale is $\varepsilon_k=(2\lambda_k a_k)^{-1}$, and
\[
 \varepsilon_k^2e^{u^k(\pm a_k+\varepsilon_k y)}
 \longrightarrow\frac1{(1+|y|^2)^2}
 \quad\text{locally uniformly on }\C.
\]
This follows from $\varepsilon_k/a_k\to0$, $F_k(\pm a_k+\varepsilon_k y)\to\pm y$, and $\varepsilon_kF_k'(\pm a_k+\varepsilon_k y)\to\pm1$. Each bubble has mass $1$, so the two bubbles exhaust the mass $2$. Away from zero, $e^{u^k}\to0$ locally uniformly. On $|z|=1$,
\[
 1-a_k^2\le|z^2-a_k^2|\le1+a_k^2
 \quad\Longrightarrow\quad \osc_{\p B_1}u^k=O(a_k^2).
\]
Thus Theorem~\ref{thm:main} applies and gives the total mass $\sigma=2=1+\gamma$.

\subsection{The cubic constraint in type \texorpdfstring{$A_2$}{A2}}\label{sec:A2}
We use this example to illustrate that the quadratic and cubic residues together determine the six Weyl masses. We intentionally use the identity \er{cp} in this case. 

With the usual notation $\mu_i = 1+\gamma_i>0$, we have 
$$
p_\mu = \mu_1 E_1 + \mu_2 E_2 = \tfrac{1}{3}\diag(2\mu_1+\mu_2, -\mu_1+\mu_2, -\mu_1-2\mu_2).
$$
The Weyl group is the symmetric group $S_3$ permuting the diagonal
entries. On a traceless diagonal matrix, $\omega_1$ is the first
entry and $\omega_2$ is minus the third entry. Thus
\er{eq:Gamma} gives 
\begin{equation}\label{eq:A2-set}
 \Gamma_{A_2}(\mu)=
 \left\{\begin{aligned}
 &(0,0),\ (\mu_1,0),\ (0,\mu_2),\\
 &(\mu_1,\mu_1+\mu_2),\ (\mu_1+\mu_2,\mu_2),\\
 &(\mu_1+\mu_2,\mu_1+\mu_2)
 \end{aligned}\right\}.
\end{equation}

In type $A_2$, the scalar Miura factorization gives another
construction of the $W$-invariants; see \cite{Nie}.
We use the following order of factors:
\begin{gather}
 \bigl(\p-\p U_2\bigr)
 \bigl(\p+\p U_2-\p U_1\bigr)
 \bigl(\p+\p U_1\bigr)
 =\p^3+W_2\p+W_3,\label{eq:A2-Miura}\\
 W_2=\p^2U_1+\p^2U_2-(\p U_1)^2-(\p U_2)^2
       +(\p U_1)(\p U_2),\label{eq:A2-W2}\\
 W_3=\p^3U_1-2(\p U_1)(\p^2U_1)
       +(\p U_1)(\p^2U_2)\label{eq:A2-W3}\\
 \qquad-(\p U_1)(\p U_2)^2
       +(\p U_1)^2(\p U_2).\notag
\end{gather}
Substituting $\p U_i=\gamma^i/z$
into \er{eq:A2-W2} and \er{eq:A2-W3} gives the residue polynomials
\begin{equation}\label{eq:A2-residues}
 \begin{aligned}
 c_2(\gamma^1,\gamma^2)
 &=-\gamma^1-\gamma^2-(\gamma^1)^2-(\gamma^2)^2
   +\gamma^1\gamma^2,\\
 c_3(\gamma^1,\gamma^2)
 &=\gamma^1(1+\gamma^1-\gamma^2)(2+\gamma^2).
 \end{aligned}
\end{equation}

The conservation identities \er{cp} require both polynomials in \er{eq:A2-residues}
to be unchanged when $\gamma^i$ is replaced by
$\gamma^i-\sigma_i$. The quadratic equation is the $A_2$ case of \er{eq:An-quadratic}:
\begin{equation}\label{eq:A2-quadratic}
 \begin{aligned}
 \sigma_1^2-\sigma_1\sigma_2+\sigma_2^2
 &=(2\gamma^1-\gamma^2+1)\sigma_1 + (2\gamma^2-\gamma^1+1)\sigma_2\\
 &=\mu_1\sigma_1+\mu_2\sigma_2.
 \end{aligned}
\end{equation}

We omit writing out the $c_3$ condition. More neatly, the indicial polynomial of \er{eq:A2-Miura} factors as
\begin{equation*}
 t(t-1)(t-2)+c_2t+c_3
 =(t+\gamma^1)(t-1-\gamma^1+\gamma^2)(t-2-\gamma^2).
\end{equation*}
Thus conserving $c_2,c_3$
says that replacing $\gamma^i$ by $\gamma^i-\sigma_i$ in the above indices of 
$$
-\gamma^1,\ \gamma^1-\gamma^2+1,\ \gamma^2+2
$$
gives a permutation, again producing \er{eq:A2-set}.

For example, take $\mu=(1,4)$ and $\sigma=(3,1)$. By \er{eq:A2-set}, $(1,0)\in\Gamma_{A_2}(1,4)$, so the decomposition
\[
 (3,1)=(1,0)+(2,1),\qquad (2,1)\in\mathbb Z^2,
\]
verifies the orbit-plus-integer restriction. Moreover, \er{eq:A2-quadratic} holds since $9-3+1=3+4$. But $(\gamma^1,\gamma^2)=(1,2)$, and \er{eq:A2-residues} gives
\[
 (c_2,c_3)(1,2)=(-6,0),\qquad
 (c_2,c_3)(1-3,2-1)=(-6,12).
\]
Thus this mass satisfies the orbit-plus-integer and Pohozaev restrictions in \cite{LYZ}, but is here excluded for constant coefficients by the cubic invariant.

\end{document}